\theoremstyle{plain}
\newtheorem{Thm}{Theorem}[section]
\newtheorem{Lem}[Thm]{Lemma}
\newtheorem{Crlr}[Thm]{Corollary}
\newtheorem{Prop}[Thm]{Proposition}
\newtheorem{Obs}[Thm]{Observation}
\theoremstyle{definition}
\theoremstyle{remark}
\def\finf{\mathop{{\rm I}\kern -.27 em {\rm F}}\nolimits}
\begin{document}

\title{Total Domination Value in Graphs}

\author{\bf Cong X. Kang\\
\small{Texas A\&M University-Galveston, Galveston, TX 77553, USA} \\
{\small\em kangc@tamug.edu}}

\maketitle

\date{}

\begin{abstract}
A set $D \subseteq V(G)$ is a \emph{total dominating set} of
$G$ if for every vertex $v \in V(G)$ there exists a vertex $u \in
D$ such that $u$ and $v$ are adjacent. A total dominating set of
$G$ of minimum cardinality is called a $\gamma_t(G)$-set.
For each vertex $v \in V(G)$, we define the \emph{total domination
value} of $v$, $TDV(v)$, to be the number of $\gamma_t(G)$-sets to which $v$
belongs. This definition gives rise to \emph{a local study of total domination}
in graphs. In this paper, we study some basic properties of the $TDV$ function; also, we derive explicit formulas for the $TDV$ of any complete n-partite graph, any cycle, and any path.
\end{abstract}

\noindent\small {\bf{Key Words:}} a local study of total domination, total domination value, total dominating set, $\gamma_t(G)$-set, complete n-partite graphs, cycles, paths\\
\small {\bf{2000 Mathematics Subject Classification:}} 05C69, 05C38\\

\section{Introduction}

Let $G = (V(G),E(G))$ be a simple, undirected, and nontrivial graph without isolated vertices. For $S \subseteq V(G)$, we denote
by $<\!S\!>$ the subgraph of $G$ induced by $S$. A set $D \subseteq V(G)$ is a \emph{total dominating set} (TDS) of
$G$ if for any $v \in V(G)$ there exists a $u \in
D$ such that $uv\in E(G)$. The \emph{total domination
number} of $G$, denoted by $\gamma_t(G)$, is the minimum
cardinality of a TDS in $G$; a TDS of $G$ of minimum
cardinality is called a $\gamma_t(G)$-set. The notion of total domination in
graphs was introduced by Cockayne et al.~\cite{EJ}. For a survey of total domination in graphs, see \cite{Henning}. For other
concepts in domination, refer to \cite{Dom1}. We generally follow \cite{CZ} for notation and graph theory terminology.\\

For each vertex $v \in V(G)$, we define the \emph{total domination value} of $v$, denoted by $TDV_{G}(v)$, to be the number of
$\gamma_t(G)$-sets to which $v$ belongs; we often drop $G$ when ambiguity is not a concern. We also define $\tau(G)$ to be the total number of
$\gamma_t(G)$-sets. Clearly, $0 \le TDV_G(v) \le \tau(G)$ for any $G$ and any $v\in G$. This definition gives rise to \emph{a local study of total domination} in graphs which is as natural as the notion of total domination itself, starting with the motivating problem of the five queens, as described by Cockayne et al.\,in~\cite{EJ}. A casual chess player is aware that it is important to control the center squares of the chessboard -- particularly
in the initial and middle phases of the game: thus, in a certain sense, center squares have greater (total) domination value; we'll take a look at a couple of ``miniature chess boards" at the end of next section. In any real-world situation which can be modeled by a graph and where (total) domination is of interest, the particular locations commanding high (total) domination values -- strategic high grounds,
if you will -- are obviously important. Though over a thousand papers have already been published on a
plethora of domination topics as of the late 1990's (see p.1 of~\cite{Henning}), a systematic local study of (total) domination is either new or not
well-known. However, in \cite{tdv}, Cockayne, Henning, and Mynhardt characterized the vertices in trees which attain extremal total domination values. In this paper, we study some basic properties of the $TDV$ function; we also derive explicit formulas for the $TDV$ of any complete n-partite graph, any cycle, and any path. For an analogous discussion on the $DV$ (\emph{domination value}) function, see~\cite{Yi}.\\


\section{Basic properties of $TDV$: upper and lower bounds}

For a vertex $v \in V(G)$, the \emph{open neighborhood} $N(v)$
of $v$ is the set of all vertices adjacent to $v$ in $G$, and the
\emph{closed neighborhood} of $v$ is the set $N[v]=N(v) \cup
\{v\}$. In this section, we consider the lower and upper bounds of
the $TDV$ function for a fixed vertex $v_0$ and for
$v \in N[v_0]$. If equality is obtained for a graph of some order in an inequality (the bound), we will say the bound is sharp.
We first make the following\\

\begin{Obs}\label{observation}
$\displaystyle \sum_{v \in V(G)} TDV_G(v) = \tau(G) \cdot
\gamma_t(G)$
\end{Obs}

To see this, list the vertices of each $\gamma_t(G)$-set in a row
--- forming a table with $\tau$ rows. $TDV(v)$ is the number of
appearances (possibly zero) the vertex $v$ makes in the table of
size $\tau$ by $\gamma_t(G)$.  \hfill \\

\begin{Obs}\label{isom invariant}
If there is an isomorphism of graphs carrying a vertex $v$ in $G$
to a vertex $v'$ in $G'$, then $TDV_{G}(v)=TDV_{G'}(v')$.
\end{Obs}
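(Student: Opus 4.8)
The plan is to exploit the single structural fact that a graph isomorphism preserves adjacency, and hence preserves the entire combinatorial data defining a total dominating set, and then to track how the distinguished vertex behaves under the induced bijection on $\gamma_t$-sets. Throughout I fix an isomorphism $\phi\colon V(G)\to V(G')$ with $\phi(v)=v'$.

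First I would show that $\phi$ carries total dominating sets to total dominating sets. If $D$ is a TDS of $G$, then for any $w'\in V(G')$ write $w'=\phi(w)$; since $D$ is a TDS there is $u\in D$ with $uw\in E(G)$, and because $\phi$ preserves adjacency we obtain $\phi(u)w'\in E(G')$ with $\phi(u)\in\phi(D)$. Thus $\phi(D)$ is a TDS of $G'$. Applying the same argument to the inverse isomorphism $\phi^{-1}$ shows that $\phi$ restricts to a bijection between the TDSs of $G$ and the TDSs of $G'$; since $\phi$ is a bijection of vertex sets, it preserves cardinality, so $|\phi(D)|=|D|$ for every such $D$.

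Next I would conclude that $\gamma_t(G)=\gamma_t(G')$: the correspondence above sends a minimum TDS to a TDS of the same size, and no strictly smaller TDS can exist on either side, since pulling such a set back through $\phi$ or $\phi^{-1}$ would contradict minimality on the other. Consequently $\phi$ restricts further to a bijection between the collection of $\gamma_t(G)$-sets and the collection of $\gamma_t(G')$-sets.

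Finally I would restrict this last bijection to those sets containing the marked vertex. Since $\phi(v)=v'$ and $\phi$ is injective, a $\gamma_t(G)$-set $D$ satisfies $v\in D$ if and only if its image satisfies $v'\in\phi(D)$. Hence $\phi$ induces a bijection between $\{\,D : D \text{ is a } \gamma_t(G)\text{-set with } v\in D\,\}$ and $\{\,D' : D' \text{ is a } \gamma_t(G')\text{-set with } v'\in D'\,\}$, and counting both sides gives $TDV_G(v)=TDV_{G'}(v')$. I do not expect a genuine obstacle here, as the observation is essentially a bookkeeping consequence of functoriality; the only point warranting care is verifying that the induced map on $\gamma_t$-sets is a well-defined bijection that respects membership of the marked vertex, which is exactly what the injectivity and adjacency-preservation of $\phi$ supply.
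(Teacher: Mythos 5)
Your proof is correct and is simply the careful elaboration of the argument the paper dismisses in one line (``The $TDV$ function is obviously invariant under isomorphism''): an isomorphism preserves adjacency, hence induces a cardinality-preserving bijection on total dominating sets, hence on $\gamma_t$-sets, hence on $\gamma_t$-sets containing the marked vertex. There is no gap and no divergence of method --- you have just made explicit the bookkeeping the paper leaves to the reader.
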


The $TDV$ function is obviously invariant under isomorphism. Many basic types of graphs, such as cycles and paths, admit obvious automorphisms. \\

\begin{Obs}\label{observation2}
Let $G$ be the disjoint union of two graphs $G_1$ and $G_2$. Then
$\gamma_t(G)=\gamma_t(G_1)+ \gamma_t(G_2)$ and $\tau(G)=\tau(G_1) \cdot \tau(G_2)$.
For $v \in V(G_1)$, $TDV_G(v)=TDV_{G_1}(v) \cdot \tau(G_2)$.
\end{Obs}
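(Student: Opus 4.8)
The plan is to set up a bijection between the total dominating sets of $G$ and the pairs $(D_1, D_2)$, where $D_i$ is a total dominating set of $G_i$. The crucial structural fact is that $G$ has no edge joining $V(G_1)$ to $V(G_2)$, so a vertex lying in $V(G_i)$ can be dominated only by a neighbor in $V(G_i)$. Hence for any TDS $D$ of $G$, the restriction $D \cap V(G_i)$ must already totally dominate $G_i$; conversely, the union of a TDS of $G_1$ with a TDS of $G_2$ is a TDS of $G$, since every vertex of $G$ lives in exactly one $G_i$ and finds a neighbor in the corresponding piece. This shows that $D \mapsto (D \cap V(G_1),\, D \cap V(G_2))$ is a bijection from $\{\text{TDS of }G\}$ onto $\{\text{TDS of }G_1\} \times \{\text{TDS of }G_2\}$, with inverse $(D_1, D_2) \mapsto D_1 \cup D_2$.

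First I would record the cardinality identity $|D| = |D_1| + |D_2|$ carried by this decomposition (the union being disjoint). Minimizing the left side is therefore achieved precisely by minimizing $|D_1|$ and $|D_2|$ independently, which gives $\gamma_t(G) = \gamma_t(G_1) + \gamma_t(G_2)$; the existence of a $\gamma_t(G_i)$-set is guaranteed because each $G_i$ inherits the no-isolated-vertex hypothesis. Restricting the bijection to minimum sets, a TDS $D$ of $G$ has cardinality $\gamma_t(G)$ if and only if both $D_1$ and $D_2$ are minimum, so the $\gamma_t(G)$-sets correspond exactly to the pairs in $\{\gamma_t(G_1)\text{-set}\} \times \{\gamma_t(G_2)\text{-set}\}$. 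Counting then yields $\tau(G) = \tau(G_1)\cdot \tau(G_2)$.

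For the $TDV$ claim, fix $v \in V(G_1)$. Under the correspondence above, $v \in D$ if and only if $v \in D_1$, while $D_2$ is free to range over all $\gamma_t(G_2)$-sets. Thus the $\gamma_t(G)$-sets containing $v$ are in bijection with the pairs $(D_1, D_2)$ in which $D_1$ is a $\gamma_t(G_1)$-set containing $v$ and $D_2$ is an arbitrary $\gamma_t(G_2)$-set; counting these gives $TDV_G(v) = TDV_{G_1}(v)\cdot \tau(G_2)$.

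I do not anticipate a genuine obstacle, as everything rests on the single observation that vertices in distinct components cannot dominate one another. The only point requiring care is verifying that the global minimum of $|D_1| + |D_2|$ decomposes as the sum of the individual minima --- that is, that choosing a minimum set in each factor is both necessary and sufficient for a minimum set in $G$ --- which follows at once from the additivity of cardinality and the independence of the two factors.
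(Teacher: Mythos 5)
Your proof is correct and is exactly the argument the paper intends: the paper states this result as an Observation with no written proof, and your component-wise bijection $D \mapsto (D \cap V(G_1),\, D \cap V(G_2))$, together with the additivity of cardinality over the disjoint union and the restriction to minimum sets, is precisely the routine justification being taken for granted. Nothing in your write-up deviates from that implicit argument, and your care about the existence of $\gamma_t(G_i)$-sets (no isolated vertices) is a sound, if minor, addition.
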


\begin{Prop}\label{upperbound1}
For a fixed $v_0 \in V(G)$, we have
$$\tau(G) \le \sum_{v \in N[v_0]} TDV_G(v) \le \tau(G) \cdot \gamma_t(G),$$ and both bounds are
sharp.
\end{Prop}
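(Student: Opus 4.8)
The plan is to reduce both inequalities to a single double-counting identity and then read off each bound from an easy per-set estimate. First I would rewrite the middle quantity by exchanging the order of summation: since $TDV_G(v)$ counts the $\gamma_t(G)$-sets containing $v$, we obtain
$$\sum_{v \in N[v_0]} TDV_G(v) = \sum_{D} |D \cap N[v_0]|,$$
where $D$ ranges over all $\tau(G)$ many $\gamma_t(G)$-sets. Everything then follows from bounding the single term $|D \cap N[v_0]|$ from below and from above.

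For the lower bound, the key observation is that every $\gamma_t(G)$-set $D$ must in particular totally dominate $v_0$: there is some $u \in D$ adjacent to $v_0$, i.e.\ $u \in N(v_0) \subseteq N[v_0]$. Hence $|D \cap N[v_0]| \ge 1$ for each $D$, and summing over the $\tau(G)$ sets gives $\sum_{D} |D \cap N[v_0]| \ge \tau(G)$. For the upper bound I would simply note $|D \cap N[v_0]| \le |D| = \gamma_t(G)$, so the sum is at most $\tau(G) \cdot \gamma_t(G)$; alternatively this is immediate from Observation \ref{observation} together with the nonnegativity of $TDV$, since $N[v_0] \subseteq V(G)$.

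The remaining task is sharpness, i.e.\ exhibiting one graph for each bound. For the lower bound, equality forces $|D \cap N[v_0]| = 1$ for every $\gamma_t(G)$-set $D$; I expect the path $P_4$ on vertices $1,2,3,4$ (in that order along the path), with $v_0 = 1$, to work, since its unique $\gamma_t$-set is $\{2,3\}$ and $N[1] = \{1,2\}$ meets it in the single vertex $2$. For the upper bound, equality holds precisely when $TDV_G(v) = 0$ for all $v \notin N[v_0]$, which is automatic when $v_0$ is adjacent to every other vertex so that $N[v_0] = V(G)$; the star $K_{1,n}$ with $v_0$ its center is the cleanest witness, and Observation \ref{observation} then yields equality directly.

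I do not anticipate a genuine obstacle here: the argument is essentially a double count plus the defining property of a total dominating set. The only point requiring care is the sharpness discussion --- verifying that the proposed witnesses really are extremal, i.e.\ that the chosen $v_0$ forces the per-set intersection sizes to be exactly $1$ (lower bound) and that no minimum TDS uses a vertex outside $N[v_0]$ (upper bound).
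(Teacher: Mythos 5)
Your proposal is correct and takes essentially the same route as the paper: the lower bound from the fact that every $\gamma_t(G)$-set must contain a vertex of $N[v_0]$ (a neighbor totally dominating $v_0$), the upper bound from Observation~\ref{observation} (or the trivial per-set bound $|D\cap N[v_0]|\le\gamma_t(G)$), and the identical sharpness witnesses --- an end-vertex of $P_4$ and the center of a star. Your explicit double-counting identity $\sum_{v\in N[v_0]}TDV_G(v)=\sum_{D}|D\cap N[v_0]|$ merely makes precise what the paper's argument leaves implicit.
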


\begin{proof}
The upper bound follows from Observation \ref{observation}.
For the lower bound, note that every $\gamma_t(G)$-set $\Gamma$
must contain a vertex in $N[v_0]$: otherwise $\Gamma$ fails to totally dominate $v_0$.\\

For sharpness of the lower bound, take $v_0$ to be an end-vertex
of a path on 4 vertices. More generally, we can take $v_0$ to be
an end-vertex of a path on $4k$ vertices (see Theorem~\ref{theorem on paths} and Corollary~\ref{path
on 4k}). For sharpness of the upper bound, take as $v_0$ the
central vertex of a star. \hfill
\end{proof}

Remark: In fact, both the lower and upper bounds of Proposition \ref{upperbound1}
are achieved for a graph of order $n$ for any $n\geq 4$. Let $G_4$ be a path on $4$ vertices,
and we construct $G_n$ for $n\geq 5$ from $G_4$ by taking one support vertex $u$ of $G_4$,
$n-4$ new vertices, and draw one edge from $u$ to each of the $n-4$ new vertices.
For sharpness of the lower bound, take as $v_0$ any end-vertex. For sharpness of the upper bound, take as $v_0$ any support vertex.  \\

\begin{Obs}
If $s$ is a support vertex of $G$, then $\sum_{v \in N[s]} TDV(v) \ge 2 \tau(G)$,
since each $\gamma_t$-set must contain every support vertex $s$ and a neighbor of $s$.
More generally, the bound holds for any vertex $v$ for which $TDV(v)=\tau$.
\end{Obs}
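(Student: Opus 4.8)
The plan is to evaluate the sum $\sum_{v \in N[s]} TDV(v)$ by the same incidence-counting device used for Observation~\ref{observation}. Listing the $\tau(G)$ many $\gamma_t(G)$-sets as the rows of a table and recording, for each set $\Gamma$, which vertices of $N[s]$ it contains, one sees that
$$\sum_{v \in N[s]} TDV(v) = \sum_{\Gamma} |\Gamma \cap N[s]|,$$
the sum ranging over all $\gamma_t(G)$-sets $\Gamma$. Thus the whole claim reduces to the single local estimate $|\Gamma \cap N[s]| \ge 2$ for every such $\Gamma$; summing that inequality over the $\tau(G)$ rows immediately yields the bound $2\tau(G)$.

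To establish $|\Gamma \cap N[s]| \ge 2$, I would exhibit two distinct members of $N[s]$ lying in an arbitrary $\gamma_t(G)$-set $\Gamma$. First, $s \in \Gamma$: because $s$ is a support vertex it has a leaf neighbor $\ell$, whose only neighbor is $s$, so the only vertex that can totally dominate $\ell$ is $s$ itself, forcing $s \in \Gamma$. Second, some vertex of $N(s)$ lies in $\Gamma$: since $\Gamma$ is a \emph{total} dominating set, $s$ too must be totally dominated, i.e.\ there is a $u \in \Gamma$ adjacent to $s$, and such a $u$ lies in $N(s) \subseteq N[s]$. As $G$ is simple, $s \notin N(s)$, so $s$ and $u$ are distinct members of $N[s]$, giving $|\Gamma \cap N[s]| \ge 2$.

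For the generalization, the only place the support hypothesis enters is in forcing $s \in \Gamma$ for every $\Gamma$; this is precisely the statement $TDV(s) = \tau(G)$. Hence for any vertex $v$ with $TDV(v) = \tau(G)$ we have $v \in \Gamma$ for all $\gamma_t(G)$-sets $\Gamma$, and the total-domination argument again supplies a neighbor of $v$ in each $\Gamma$, so $|\Gamma \cap N[v]| \ge 2$ and $\sum_{v' \in N[v]} TDV(v') \ge 2\tau(G)$ as before; a support vertex is simply the special case in which $TDV(s) = \tau(G)$ holds for the concrete reason above. There is no genuine obstacle here: the only points demanding a word of care are the distinctness of the two chosen vertices (guaranteed by simplicity of $G$) and the clean passage from the vertex-indexed sum to the set-indexed sum, both of which are routine.
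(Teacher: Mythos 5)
Your proof is correct and follows the paper's own reasoning exactly: the paper's justification is precisely that every $\gamma_t(G)$-set must contain the support vertex $s$ (to dominate its leaf neighbor) and, by totality, a neighbor of $s$, so each set meets $N[s]$ in at least two vertices and summing over the $\tau(G)$ sets gives the bound. You have simply spelled out the double-counting step and the distinctness of the two vertices, which the paper leaves implicit.
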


\begin{Prop} \label{upperbound2}
For any $v_0 \in V(G)$, $$\sum_{v \in N[v_0]} TDV_G(v) \le \tau(G)
\cdot (1+ \deg_G(v_0)),$$ and the bound is sharp.
\end{Prop}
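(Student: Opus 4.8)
The plan is to separate the proof into the inequality and its sharpness, with essentially all of the content residing in the latter. For the inequality, I would simply invoke the trivial per-vertex bound $0 \le TDV_G(v) \le \tau(G)$ recorded in the Introduction. Since $G$ is simple, $v_0 \notin N(v_0)$, so the closed neighborhood $N[v_0]$ contains exactly $1 + \deg_G(v_0)$ vertices. Summing $TDV_G(v) \le \tau(G)$ over all $v \in N[v_0]$ then gives at once
$$\sum_{v \in N[v_0]} TDV_G(v) \le |N[v_0]| \cdot \tau(G) = (1 + \deg_G(v_0)) \cdot \tau(G).$$
No structural information about $G$ is needed here; the inequality is a one-line consequence of the global per-vertex bound.

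The substantive step is sharpness. Equality forces $TDV_G(v) = \tau(G)$ for every $v \in N[v_0]$ simultaneously; that is, I need a graph in which $v_0$ \emph{and} all of its neighbors lie in every $\gamma_t$-set. Rather than characterize all such configurations, my plan is to exhibit one explicitly. The natural candidate is the subdivided star (spider) $S_k$: a central vertex $v_0$ joined to $k$ vertices $u_1, \ldots, u_k$, where each $u_i$ carries a pendant leaf $\ell_i$. Here $\deg_G(v_0) = k$ and $N[v_0] = \{v_0, u_1, \ldots, u_k\}$, so the closed neighborhood consists precisely of the center and the subdivision vertices.

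To verify sharpness I would show that $S_k$ (for $k \ge 2$) has a \emph{unique} $\gamma_t$-set, namely $\{v_0, u_1, \ldots, u_k\}$. First, each $u_i$ is a support vertex, hence forced into every total dominating set. Second, each $u_i$ must itself be totally dominated, and its only neighbors are $v_0$ and $\ell_i$; thus every total dominating set must contain $v_0$ or $\ell_i$ for each $i$. Including $v_0$ dominates all $u_i$ at the cost of a single vertex, whereas omitting $v_0$ forces all $k$ leaves into the set. Comparing cardinalities $k+1$ versus $2k$ shows that for $k \ge 2$ the unique minimum total dominating set is $\{v_0, u_1, \ldots, u_k\}$, so $\tau(S_k) = 1$ and $TDV(v) = 1 = \tau$ for each $v \in N[v_0]$. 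Therefore $\sum_{v \in N[v_0]} TDV(v) = k+1 = \tau \cdot (1 + \deg_G(v_0))$, which is equality. The remaining small degree $k=1$ is handled by $K_2$ (taking $v_0$ to be either vertex), where $\tau = 1$ and both vertices lie in the unique $\gamma_t$-set.

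The only delicate point---the ``obstacle,'' such as it is---is the minimality-cum-uniqueness claim for the spider: one must rule out alternative minimum total dominating sets, and in particular confirm that no total dominating set of size $k+1$ can omit $v_0$. This is exactly where the strict comparison $2k > k+1$, valid only for $k \ge 2$, does the work. Everything else is routine verification of the total domination conditions.
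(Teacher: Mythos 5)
Your proof is correct, and the inequality half coincides with the paper's: $TDV_G(v)\le\tau(G)$ termwise, together with $|N[v_0]|=1+\deg_G(v_0)$, gives the bound in one line, so there is nothing to compare there. Where you genuinely differ is the sharpness witness. The paper takes $P_5$ with $v_0$ the central degree-two vertex, whose unique $\gamma_t$-set $\{2,3,4\}$ is exactly $N[v_0]$, and then realizes equality at every order $n\ge 5$ by attaching $n-5$ pendant vertices to one support vertex, keeping $\deg(v_0)=2$ throughout. You instead grow the degree of $v_0$: your spider $S_k$ is precisely the ``extended star'' the paper itself uses elsewhere (in the proof of Corollary~\ref{up1}, for sharpness of $\tau\ge 1$), and your uniqueness argument --- the supports $u_i$ are forced, each $u_i$ must be dominated by $v_0$ or its leaf, and the comparison $2k>k+1$ (valid for $k\ge 2$) rules out any minimum total dominating set omitting $v_0$ --- is sound; note that $S_2$ is exactly the paper's $P_5$ with $v_0$ the center, so the two families overlap at the base case. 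The trade-off: the paper's construction attains equality at every order $n\ge 5$ with $\deg(v_0)$ fixed at $2$, while yours attains it for every value of $\deg_G(v_0)$ (at odd orders $2k+1$, supplemented by your $K_2$ case for degree one, which is strictly optional since a single example establishes sharpness). Either family suffices for the proposition as stated.
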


\begin{proof}
For each $v\in N[v_0]$, $TDV(v)\leq \tau(G)$, and the number of vertices in $N[v_0]$ equals $1+ \deg_{G}(v_0)$. Thus,\\
$$\sum_{v\in N[v_0]}TDV(v) \leq \sum_{v\in N[v_0]}\tau(G)=\tau(G)\!\!\!\sum_{v\in N[v_0]}1=\tau(G)(1+ \deg_{G}(v_0)).$$ \\
The upper bound is achieved for a graph of order $n$ for any $n\geq 5$. Let $G_5$ be a path on $5$ vertices,
and we construct $G_n$ for $n\geq 6$ from $G_5$ by taking one support vertex $u$ of $G_5$, $n-5$ new vertices,
and draw one edge from $u$ to each of the $n-5$ new vertices. To see the sharpness of the upper bound,
take as $v_0$ the vertex of degree two, which is the common neighbor of support vertices. \hfill
\end{proof}

\begin{figure}[htbp]
\begin{center}
\scalebox{0.5}{\input{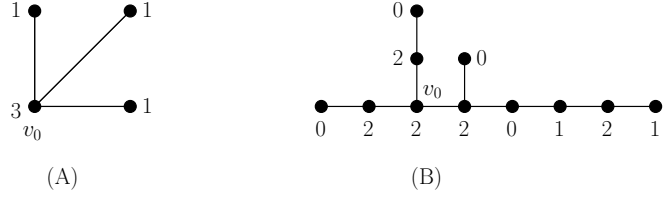}} \caption{Examples of
local total domination values and their upper
bounds}\label{figure1}
\end{center}
\end{figure}

We look at examples which compare the upper bounds of Proposition
\ref{upperbound1} and Proposition \ref{upperbound2}.
Let $v_0$ be the vertex of degree $3$ in graph (A) of Figure \ref{figure1}.
Then $\sum_{v \in N[v_0]} TDV(v)=6$. Note that $\tau =3$,
$\gamma_t =2$, and $\deg(v_0)=3$. Proposition \ref{upperbound1} yields the upper bound $\tau \cdot \gamma_t =3\cdot 2=6$,
which is sharp. But, the upper bound provided by Proposition \ref{upperbound2}
is $\tau (1+\deg(v_0))=3 \cdot (1+3)=12$, which is not sharp in this case. \\

Now, let $v_0$ be the vertex of degree $3$ and adjacent to
three support vertices, as labeled in graph (B) of Figure \ref{figure1}. Then $\sum_{v \in N[v_0]} TDV(v)=8$. Note that
$\tau =2$, $\gamma_t =6$, and $\deg(v_0)=3$. Proposition \ref{upperbound2} yields
the upper bound $\tau (1+ \deg(v_0))=2 \cdot (1+3)=8$, which is sharp.
But, the upper bound provided by Proposition \ref{upperbound1} is $\tau \cdot \gamma_t =2 \cdot 6=12$, which is not sharp in this case. \\

\begin{Prop}\label{subgraph-tau}
Let $H$ be a subgraph of $G$ with $V(H)=V(G)$. If
$\gamma_t(H)=\gamma_t(G)$, then $\tau(H) \le \tau(G)$ .
\end{Prop}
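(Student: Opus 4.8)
The plan is to use that $H$ is a spanning subgraph of $G$: since $V(H) = V(G)$ and $E(H) \subseteq E(G)$, passing from $H$ to $G$ only adds edges on a fixed vertex set. The first step I would carry out is to observe that every total dominating set of $H$ is automatically a total dominating set of $G$. Indeed, if $D$ totally dominates $H$, then each $v \in V(G)$ has some $u \in D$ with $uv \in E(H) \subseteq E(G)$, so the very same $D$ totally dominates $G$.

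From this one-directional inclusion of total dominating sets I would next record the (always-valid) inequality $\gamma_t(G) \le \gamma_t(H)$: a minimum TDS of $H$ is, by the first step, a TDS of $G$, so the minimum size of a TDS of $G$ cannot exceed $\gamma_t(H)$. Only at this point do I invoke the hypothesis $\gamma_t(H) = \gamma_t(G)$; write $k$ for this common value.

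The crux is then to upgrade each minimum TDS of $H$ into a minimum TDS of $G$. Let $D$ be an arbitrary $\gamma_t(H)$-set, so $|D| = k$ and, by the first step, $D$ is a TDS of $G$. Since $|D| = k = \gamma_t(G)$, the set $D$ is in fact a $\gamma_t(G)$-set. Hence the family of all $\gamma_t(H)$-sets is literally a subfamily of the family of all $\gamma_t(G)$-sets, and comparing the cardinalities of these two families yields $\tau(H) \le \tau(G)$.

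I do not expect a substantive obstacle here; the one point requiring care is the direction of the $\gamma_t$ inequality, since deleting edges can only raise (never lower) the total domination number, and it is precisely the assumed equality $\gamma_t(H) = \gamma_t(G)$ that converts the one-directional containment of total dominating sets into the containment of the \emph{minimum} ones that the conclusion needs. I would also remark in passing that this hypothesis presupposes $H$ has no isolated vertices, so that $\gamma_t(H)$ is defined in the first place.
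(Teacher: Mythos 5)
Your proposal is correct and follows essentially the same route as the paper's own (much terser) proof: every TDS of $H$ is a TDS of $G$ since $E(H)\subseteq E(G)$, and the hypothesis $\gamma_t(H)=\gamma_t(G)$ then makes every $\gamma_t(H)$-set a $\gamma_t(G)$-set, so the family of the former is a subfamily of the latter and $\tau(H)\le\tau(G)$. Your added observations --- that $\gamma_t(G)\le\gamma_t(H)$ always holds for spanning subgraphs, and that the hypothesis implicitly requires $H$ to have no isolated vertices --- are accurate refinements the paper leaves implicit.
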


\begin{proof}
By the first assumption, every TDS for $H$ is a TDS for $G$. By
$\gamma_t(H)=\gamma_t(G)$, it's guaranteed that every TDS of
minimum cardinality for $H$ is also a TDS \emph{of minimum
cardinality for $G$}.
\end{proof}

Next, recall the following

\begin{Thm}(\cite{EJ}) \label{ej1}
If $G$ is a connected graph with $n \ge 3$ vertices, then
$\gamma_t(G) \le \frac{2n}{3}$.
\end{Thm}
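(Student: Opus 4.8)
My plan is to reduce to trees and then induct on the order. The reduction rests on the same observation used in the proof of Proposition~\ref{subgraph-tau}: if $H$ is a spanning subgraph of $G$, so that $V(H)=V(G)$, then every total dominating set of $H$ is a total dominating set of $G$, since each edge witnessing domination in $H$ is also an edge of $G$. Applying this with $H=T$ a spanning tree of the connected graph $G$ gives $\gamma_t(G)\le\gamma_t(T)$. As $T$ is a tree on the same $n\ge 3$ vertices and has no isolated vertex, it suffices to prove $\gamma_t(T)\le\frac{2n}{3}$ for every tree $T$ of order $n\ge 3$.

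I would argue by induction on $n$. For the base, every tree of diameter at most $2$ is a star $K_{1,n-1}$, for which the center together with any leaf gives $\gamma_t=2\le\frac{2n}{3}$; this also settles $n\in\{3,4\}$. So assume the diameter is at least $3$ and fix a longest path $u_0u_1\cdots u_k$ with $k\ge 3$ and $u_0$ a leaf. Maximality forces every neighbor of the support vertex $u_1$ except $u_2$ to be a leaf; let $t\ge 1$ count these leaves. In the easy case $t\ge 2$, delete $u_1$ and its $t$ leaves to obtain a tree $T'$ of order $n-t-1$ (removing this pendant star keeps $T$ connected). If $T'$ has at most two vertices then $\{u_1,u_2\}$ already totally dominates $T$; otherwise the induction hypothesis yields a total dominating set $D'$ of $T'$ with $|D'|\le\frac23(n-t-1)$, and $D=D'\cup\{u_1,u_2\}$ totally dominates $T$ (the leaves and $u_2$ via $u_1$, and $u_1$ via $u_2$). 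Since $t\ge 2$,
$$|D|\le\tfrac23(n-t-1)+2=\tfrac{2n}{3}-\tfrac{2(t+1)}{3}+2\le\tfrac{2n}{3}.$$

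The delicate case, which I expect to be the main obstacle, is $t=1$: now $u_1$ has degree $2$ with neighbors $u_0,u_2$, and deleting only $\{u_0,u_1\}$ while paying for $\{u_1,u_2\}$ gives the useless ratio $2/2=1$. The fix is to peel off the three consecutive vertices $u_0,u_1,u_2$, still charging just $\{u_1,u_2\}$, which dominates $u_0,u_1,u_2$ among themselves. The complication is that removing $u_2$ may split $T$ into several subtrees hanging at the former neighbors of $u_2$; each such root is conveniently dominated ``for free'' by $u_2\in D$, so the recursion is not on ordinary total domination but on a relaxed version in which a prescribed vertex is already dominated from outside, and very small pieces (a lone vertex or a single edge) must be absorbed into $D$ by hand rather than fed to the induction hypothesis (a $2$-vertex tree has $\gamma_t=2>\frac43$). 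Making this relaxed recursion precise---either by strengthening the induction hypothesis to track a pre-dominated root, or by an explicit case analysis on $\deg(u_2)$ and the resulting component sizes---while checking at each step that the assembled set is totally dominating and that the $\frac23$ accounting is never violated, is where essentially all of the effort goes; the spanning-tree reduction and the case $t\ge 2$ are routine by comparison.
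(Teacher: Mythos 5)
The paper does not prove this theorem at all; it is quoted from the Cockayne--Dawes--Hedetniemi paper \cite{EJ}, so your argument must stand entirely on its own. Its first two-thirds do: the spanning-tree reduction is sound (it is the same observation as in Proposition~\ref{subgraph-tau}), the star base case is fine, and the case $t\ge 2$ is complete, with correct accounting $\frac{2}{3}(n-t-1)+2\le\frac{2n}{3}$. But the case $t=1$ is never actually carried out --- you name it as the main obstacle, sketch a ``relaxed recursion'' with a pre-dominated root, and then stop at the statement that making it precise ``is where essentially all of the effort goes.'' That is a plan, not a proof, and $t=1$ is not a corner case but the generic one: on a bare path every support vertex has $t=1$, so $P_n$ itself is handled entirely by the missing case. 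Moreover, once you strengthen the induction hypothesis to the rooted/relaxed form (every vertex except a designated root $w$ must have a neighbor in $D$, with $w$ dominated externally by $u_2\in D$), the induction must be run on the strengthened statement throughout: the base cases, the $t\ge 2$ case, and the longest-path analysis all need re-verification in the rooted setting, where the longest path may start at, end at, or pass through $w$, producing subcases you have not examined; small components ($|C|\in\{1,2\}$) must be checked to cost $0$ and $1$ respectively, including the point that a root placed in $D$ is itself dominated by $u_2$.

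The accounting also leaves you no slack, which is why the deferred case cannot be waved through. The bound is attained with equality: $\gamma_t(P_6)=4=\frac{2\cdot 6}{3}$ (the paper's own Example (b) in Section 5), and likewise $C_3$ and $C_6$. After charging $2$ for $\{u_1,u_2\}$ against the three peeled vertices $u_0,u_1,u_2$, every component $C$ hanging off $u_2$ must be dominated within exactly $\frac{2}{3}|C|$, and a rooted $P_3$ already needs $2=\frac{2}{3}\cdot 3$ vertices, so chains of such components meet the bound with equality at every step; any lossy estimate anywhere breaks the argument. Proving the rooted lemma with this exact constant is essentially the content of the theorem itself, so as written your submission establishes the bound only for trees in which the chosen support vertex always has $t\ge 2$ --- the heart of the claim remains open.
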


\begin{Crlr} \label{up1}
If $G$ is a connected graph with $n \ge 3$ vertices, then
$$1 \le \tau(G) \le {n\choose \lfloor \frac{n}{2}\rfloor}$$
where ${n\choose i}$ is the binomial coefficient. Both bounds are
sharp.
\end{Crlr}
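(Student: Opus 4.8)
The plan is to obtain both bounds from two elementary facts: every $\gamma_t(G)$-set is a subset of $V(G)$ of cardinality exactly $\gamma_t(G)$, and among all binomial coefficients $\binom{n}{k}$ the central one $\binom{n}{\lfloor n/2 \rfloor}$ is the largest. First I would dispose of the lower bound. Since $G$ is connected on $n \ge 3$ vertices it has no isolated vertex, so the whole vertex set $V(G)$ is a total dominating set; hence a TDS of minimum cardinality exists and $\tau(G) \ge 1$. For sharpness I would exhibit a connected graph with a unique $\gamma_t$-set, and the path $P_4$ on $v_1 v_2 v_3 v_4$ does the job: totally dominating the leaves $v_1$ and $v_4$ forces the support vertices $v_2, v_3$ into every TDS, and $\{v_2, v_3\}$ is itself a TDS, so it is the only $\gamma_t(P_4)$-set and $\tau(P_4) = 1$.

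For the upper bound the decisive step is the counting inequality $\tau(G) \le \binom{n}{\gamma_t(G)}$: the $\tau(G)$ distinct $\gamma_t$-sets are distinct $\gamma_t(G)$-element subsets of an $n$-element set, of which there are exactly $\binom{n}{\gamma_t(G)}$. I would then invoke the unimodality of $\binom{n}{k}$ in $k$ (it rises up to $k = \lfloor n/2 \rfloor$ and falls afterward), so that $\binom{n}{\gamma_t(G)} \le \binom{n}{\lfloor n/2 \rfloor}$ for every admissible index. Theorem \ref{ej1} even confines $\gamma_t(G)$ to the range $2 \le \gamma_t(G) \le \tfrac{2n}{3}$, over which the central coefficient is plainly the maximum, though the binomial inequality in fact holds unconditionally. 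Combining gives $\tau(G) \le \binom{n}{\lfloor n/2 \rfloor}$.

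To show the upper bound is sharp I would examine a small complete graph, where every minimum TDS of the appropriate size is realized. In $K_n$ a single vertex cannot be totally dominated from within a singleton, while any two vertices dominate each other and all the rest, so $\gamma_t(K_n) = 2$ and every $2$-subset is a $\gamma_t$-set. For $n = 4$ and $n = 5$ the central index is attained, $\gamma_t = 2 = \lfloor n/2 \rfloor$, so $\tau(K_n) = \binom{n}{2} = \binom{n}{\lfloor n/2 \rfloor}$; for $n = 3$ the same conclusion follows via the symmetry $\binom{3}{2} = \binom{3}{1}$.

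The argument is essentially routine, its only genuine content being the counting inequality together with the standard maximality of the central binomial coefficient. The part that needs the most care is the sharpness of the upper bound: one must exhibit a graph whose minimum total domination number lands at (an index symmetric to) $\lfloor n/2 \rfloor$ and for which \emph{every} subset of that size is simultaneously a minimum TDS, so that $\tau$ equals the central binomial coefficient exactly rather than merely being bounded by it. The small complete graphs $K_3, K_4, K_5$ supply such examples, so no real difficulty remains.
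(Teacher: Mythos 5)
Your proposal is correct and follows essentially the same route as the paper: the lower bound from mere existence of a minimum TDS (with $P_4$, the paper's $P_{4k}$ example at $k=1$, for sharpness), the upper bound from counting $\gamma_t(G)$-sets as $\gamma_t(G)$-element subsets dominated by the central binomial coefficient $\binom{n}{\lfloor n/2\rfloor}$, and sharpness via the complete graphs $K_3$, $K_4$, $K_5$ exactly as in the paper. Your side remark that unimodality makes the appeal to Theorem~\ref{ej1} unnecessary is a fair minor simplification, but it does not change the argument's structure.
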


\begin{proof}
Notice that $\tau(G) \ge 1$. We will show the upper bound. Since
$\frac{2n}{3} \ge \lfloor \frac {n}{2} \rfloor$, by Theorem \ref{ej1}, we have
$$\tau(G) \le \max \left\{{n\choose 2}, {n\choose 3}, \cdots, {n\choose\lfloor\frac{2n}{3}\rfloor}\right\}
\le {n\choose \lfloor \frac{n}{2}\rfloor},$$

\noindent where the last inequality easily follows from, say, the
``Pascal's triangle". For sharpness of the lower bound, consider a path on $4k$ vertices or
an extended star (obtained from a star with at least three vertices by joining a path of length one
to each end-vertex of the star).
For sharpness of the upper bound, one may take $G$ to be $K_3$, $K_4$, or $K_5$ --
complete graphs on $3$, $4$, or $5$ vertices, respectively. (Notice that the upper bound is not achieved for any other graph.) \hfill
\end{proof}

Let $\Delta(G)$ denote the maximum degree of $G$, and $\bar{G}$ the complement of $G$. We recall
\begin{Thm} (\cite{EJ}) \label{G, Gc}
If $G$ has $n$ vertices, no isolates, and $\Delta(G)
< n-1$, then $\gamma_t(G)+\gamma_t(\bar{G}) \le n+2$, with
equality if and only if $G$ or $\bar{G}=mK_2$.
\end{Thm}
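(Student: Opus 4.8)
The plan is to split on the connectivity of $G$ and $\bar{G}$. First I would record two elementary facts valid for any isolate-free graph $H$ on $n$ vertices: its entire vertex set is a total dominating set, so $\gamma_t(H)\le n$; and since a single vertex cannot totally dominate the endpoints of an edge, $\gamma_t(H)\ge 2$. Moreover $\gamma_t(H)=n$ forces every component of $H$ to have at most two vertices: otherwise a component $C$ with $|C|\ge 3$ admits, being connected, a total dominating set of size $\le\tfrac{2}{3}|C|<|C|$ by Theorem~\ref{ej1}, letting us economize. Since no isolates means no component is a single vertex, $\gamma_t(H)=n$ holds \emph{exactly} when $H=mK_2$. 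Finally, the hypotheses ``no isolates'' and $\Delta(G)<n-1$ guarantee that $\bar{G}$ is also isolate-free (a vertex is isolated in $\bar{G}$ iff it is universal in $G$), so both $\gamma_t(G)$ and $\gamma_t(\bar{G})$ are defined.

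Next I would dispose of the disconnected case, which I expect to be clean and to furnish the whole equality analysis. Suppose $G$ is disconnected and pick $x,y$ in two different components of $G$. Any two vertices lying in distinct $G$-components are adjacent in $\bar{G}$, so I claim $\{x,y\}$ totally dominates $\bar{G}$: a vertex $z\neq x,y$ lies outside the component of $x$ or outside that of $y$, hence is $\bar{G}$-adjacent to $x$ or to $y$, while $x$ and $y$ dominate each other since their $G$-components differ. Thus $\gamma_t(\bar{G})=2$ and $\gamma_t(G)+\gamma_t(\bar{G})=\gamma_t(G)+2\le n+2$, with equality iff $\gamma_t(G)=n$, i.e.\ iff $G=mK_2$. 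The symmetric argument applied to a disconnected $\bar{G}$ gives the bound with equality iff $\bar{G}=mK_2$.

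It then remains to treat the case where $G$ and $\bar{G}$ are both connected, where I would aim for the strict bound $\gamma_t(G)+\gamma_t(\bar{G})\le n+1$, consistent with the extremal families $mK_2$ and their complements being disconnected on one side. The tool I would develop is the degree bound $\gamma_t(G)\le n-\Delta(G)+1$ for isolate-free $G$; applying it to $\bar{G}$ and using $\Delta(\bar{G})=n-1-\delta(G)$ yields $\gamma_t(\bar{G})\le\delta(G)+2$, whence $\gamma_t(G)+\gamma_t(\bar{G})\le n+3-\bigl(\Delta(G)-\delta(G)\bigr)$. This already settles every non-regular $G$, where $\Delta(G)-\delta(G)\ge 1$.

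The hard part will be the regular case: when $G$ is $r$-regular the estimate above degrades to $n+3$, so I must sharpen the degree bound to $\gamma_t(G)\le n-\Delta(G)$ for regular graphs with $r\ge 2$ (the value $r=1$ being precisely $mK_2$, already handled, and $r=n-2$ forcing $\bar{G}=mK_2$, so that in the doubly-connected regular case $2\le r\le n-3$ and the sharpened bound applies to both $G$ and $\bar{G}$, giving $\gamma_t(G)+\gamma_t(\bar{G})\le(n-r)+(r+1)=n+1$). Establishing this improved bound for regular graphs, and matching the equality characterization precisely to the $mK_2$ family, is the delicate step. As a reassurance, Theorem~\ref{ej1} alone handles all small orders in this case, since $\gamma_t(G)+\gamma_t(\bar{G})\le\tfrac{4n}{3}\le n+2$ whenever $n\le 6$, so only the large, regular, doubly-connected graphs demand the refined argument.
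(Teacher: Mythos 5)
Note first that the paper does not prove this statement at all---it is quoted verbatim from Cockayne, Dawes, and Hedetniemi \cite{EJ}---so your attempt can only be judged on its own merits. Your global architecture is sound, and the disconnected half is complete and correct: the facts that $\gamma_t(H)\le n$ for isolate-free $H$ with equality exactly when $H=mK_2$ (via Theorem~\ref{ej1} applied componentwise), that $\bar{G}$ is isolate-free because $\Delta(G)<n-1$, and that any two vertices from distinct components of $G$ form a total dominating set of $\bar{G}$, together yield both the bound and the full equality characterization whenever $G$ or $\bar{G}$ is disconnected.

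The genuine gap is the doubly-connected case, and it is twofold. First, your claim that the estimate $\gamma_t(G)+\gamma_t(\bar{G})\le n+3-(\Delta(G)-\delta(G))$ ``settles every non-regular $G$'' is wrong for the purpose you need it: when $\Delta(G)-\delta(G)=1$ it gives only $\le n+2$, so it does not rule out equality for a connected non-regular graph with connected complement---yet ruling out equality in the doubly-connected case is precisely what the characterization demands, since $mK_2$ and its complement each have a disconnected member of the pair. Second, the sharpened bound $\gamma_t(G)\le n-\Delta(G)$ for regular graphs, on which your whole regular case rests, is asserted but never proved; you yourself flag it as ``the delicate step,'' so the proposal is incomplete exactly where the work lies. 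Both holes close at once if you invoke part (ii) of the theorem of Cockayne et al.\ that the paper quotes immediately after this statement: if $H$ is connected and $\Delta(H)<n-1$, then $\gamma_t(H)\le n-\Delta(H)$. In the doubly-connected case this applies to $G$ and to $\bar{G}$ (note $\Delta(\bar{G})=n-1-\delta(G)<n-1$ precisely because $G$ has no isolates), giving $\gamma_t(G)+\gamma_t(\bar{G})\le\bigl(n-\Delta(G)\bigr)+\bigl(\delta(G)+1\bigr)=n+1-\bigl(\Delta(G)-\delta(G)\bigr)\le n+1$, strictly below $n+2$, with no regular/non-regular split needed. Finally, your small-order ``reassurance'' via Theorem~\ref{ej1} gives $\gamma_t(G)+\gamma_t(\bar{G})\le\frac{4n}{3}\le n+2$ for $n\le 6$, which again fails to deliver the strictness required at $n=6$, so it does not reduce the burden on the refined argument.
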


\begin{Prop}
Let $G$ be a graph on $n=2m \ge 4$ vertices. If $G$ or $\bar{G}$
is $mK_2$, then $$TDV_G(v)+TDV_{\bar{G}}(v)=n-1.$$
\end{Prop}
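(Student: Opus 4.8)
The plan is to exploit the symmetry of the claimed identity under complementation. Since the quantity $TDV_G(v)+TDV_{\bar G}(v)$ is unchanged when we interchange $G$ and $\bar G$, I may assume without loss of generality that $G=mK_2$, so that $\bar G$ is the complement of a perfect matching on $2m$ vertices, i.e.\ the complete multipartite graph $K_{2,2,\ldots,2}$ (the ``cocktail-party graph'') in which each vertex is adjacent to every other vertex except its matched partner. For a vertex $v$ I will write $v'$ for its partner throughout.

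First I would pin down $TDV_G$. In $G=mK_2$ each vertex $v$ has the single neighbor $v'$, so any TDS is forced to contain $v'$ in order to totally dominate $v$. Letting $v$ range over all vertices, and using that every vertex is the partner of exactly one vertex, every TDS must equal all of $V(G)$. Hence $\gamma_t(G)=n$, the unique $\gamma_t(G)$-set is $V(G)$, so $\tau(G)=1$ and $TDV_G(v)=1$ for every $v$.

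Next I would compute $TDV_{\bar G}$ by determining $\gamma_t(\bar G)$ together with its minimum TDSs. A single vertex never totally dominates (it fails to dominate itself), so $\gamma_t(\bar G)\ge 2$. The heart of the argument, and the one step needing genuine care, is to show that a pair $\{u,w\}$ is a TDS of $\bar G$ exactly when $u$ and $w$ are \emph{not} partners, equivalently when $uw$ is an edge of $\bar G$. I would verify this by neighborhood bookkeeping: in $\bar G$ a vertex $x$ fails to be adjacent to $u$ only if $x\in\{u,u'\}$. So for a non-partner pair the only vertices possibly missed by $u$ are $u$ and $u'$, and both are adjacent to $w$, since $w\notin\{u,u'\}$ forces $u,u'\notin\{w,w'\}$ (e.g.\ $u=w'$ would give $w=u'$); symmetrically, the only vertices missed by $w$ are $w,w'$, and both are adjacent to $u$. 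Thus every vertex is dominated and $\{u,w\}$ is a TDS. Conversely a partner pair $\{u,u'\}$ fails, as $u$ has no neighbor inside it. This shows $\gamma_t(\bar G)=2$ and that the $\gamma_t(\bar G)$-sets are precisely the edges of $\bar G$.

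Finally I would assemble the count. Since the $\gamma_t(\bar G)$-sets are exactly the edges of $\bar G$, the value $TDV_{\bar G}(v)$ equals the number of edges incident to $v$, namely $\deg_{\bar G}(v)=2m-2=n-2$, because $v$ is joined to all vertices but itself and $v'$. Adding the two contributions gives $TDV_G(v)+TDV_{\bar G}(v)=1+(n-2)=n-1$, as claimed. I would also remark that the hypothesis $n\ge 4$, i.e.\ $m\ge 2$, is exactly what guarantees the existence of non-partner pairs and hence that $\gamma_t(\bar G)=2$; for $m=1$ the complement $\overline{K_2}$ has isolated vertices and total domination is undefined.
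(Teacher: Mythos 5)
Your proof is correct and follows essentially the same route as the paper: reduce without loss of generality to $G=mK_2$, observe $\tau(G)=1$ so $TDV_G(v)=1$, show $\gamma_t(\bar G)=2$ with the $\gamma_t(\bar G)$-sets being exactly the non-partner (adjacent) pairs, and conclude $TDV_{\bar G}(v)=n-2$. The only difference is cosmetic: where the paper labels vertices and says it is ``obvious'' that the sets $\{1,\alpha\}$, $3\le\alpha\le 2m$, enumerate the $\gamma_t(\bar G)$-sets through a given vertex, you verify this neighborhood bookkeeping explicitly (and correctly note the role of $m\ge 2$).
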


\begin{proof}
Without loss of generality, assume $G=mK_2$ and label the vertices of $G$ by $1,\ldots,2m$. Further assume that the vertex $2k-1$ is
adjacent to the vertex $2k$, where $1 \le k \le m$. Clearly, $\tau (G)=1=TDV_G(v)$ for any $v\in V(G)$. \\

Now, consider $\bar{G}$ and the vertex labeled $1$ for ease of
notation. It's obvious that $\gamma_t(\bar{G})=2$, and $\{1,
\alpha\}$ as $\alpha$ ranges from $3$ to $2m$ enumerates all total
dominating sets containing the vertex $1$. Thus
$TDV_{\bar{G}}(1)=2m-2=n-2$. By relabeling the vertices, we see
that $TDV_{\bar{G}}(v)=n-2$ holds for any $v \in V(\bar{G})$.
Therefore, $TDV_G(v)+TDV_{\bar{G}}(v)=n-1$. \hfill
\end{proof}

\begin{Lem} \label{gamma-t 2}
For any graph $G$ with $\gamma_t(G)=2$, $TDV(v) \leq \deg(v)$ for any $v\in V(G)$.
\end{Lem}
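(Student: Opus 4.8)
The plan is to first pin down the structure of a $\gamma_t(G)$-set when $\gamma_t(G)=2$, and then to set up an injection from the family of such sets containing a fixed vertex $v$ into the open neighborhood $N(v)$. First I would observe that any total dominating set $D=\{u,w\}$ of cardinality $2$ must consist of two \emph{adjacent} vertices. Indeed, since $D$ totally dominates $G$, the vertex $u$ itself must have a neighbor lying in $D$; because $G$ is simple (so $u\notin N(u)$), that neighbor can only be $w$, forcing $uw\in E(G)$. Thus every $\gamma_t(G)$-set is an edge of $G$.

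Next I would fix $v\in V(G)$ and consider the $\gamma_t(G)$-sets to which $v$ belongs. By the previous step, each such set has the form $\{v,w\}$ with $w\in N(v)$. The assignment $\{v,w\}\mapsto w$ is clearly injective, since the second element is recovered from the set once $v$ is deleted. Hence the number of $\gamma_t(G)$-sets containing $v$ is at most the number of neighbors of $v$, that is, $TDV(v)\le |N(v)|=\deg(v)$, which is the claim.

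There is essentially no hard step here; the only substantive point is the first observation that a two-element total dominating set must be an edge, and this follows immediately from the definition of total domination, namely that every vertex---including those inside the set---needs a neighbor inside the set. The rest is just counting via the injection. I would not expect any real obstacle; I would only remark that the bound does not even use the full domination condition $N(v)\cup N(w)=V(G)$ required of a genuine $\gamma_t$-set (we merely let $w$ range over a subset of $N(v)$), so the inequality is typically strict.
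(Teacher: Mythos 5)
Your proof is correct and follows essentially the same route as the paper's: the paper's one-line argument is precisely your central observation that any $\gamma_t(G)$-set containing $v$ must have its other member in $N(v)$ (because $v$ itself needs a neighbor in the set), whence $TDV(v)\le |N(v)|=\deg(v)$. Your additional remarks (that the pair must be an edge, and the injectivity of $\{v,w\}\mapsto w$) merely spell out details the paper leaves implicit.
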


\begin{proof}
For each $\gamma_t$-set $\Gamma$ containing $v$, the other member of $\Gamma$ must be a vertex in $N(v)$, and $|N(v)|=\deg(v)$. \hfill
\end{proof}

\begin{Prop} \label{degree n-1}
Let $G$ be a graph of order $n$ such that $\Delta (G)=n-1$. Then $\gamma_t(G)=2$ and $TDV(v) \le n-1$ for any $v \in V(G)$. Equality holds if and
only if $\deg(v)=n-1$.
\end{Prop}

\begin{proof}
If $\deg(v)=n-1$, then $\gamma_t=2$; this is because $\{v, w\}$, where $w \in N(v)$, is a $\gamma_t$-set.
Then Lemma \ref{gamma-t 2} gives $TDV(v) \le n-1$ for any $v \in V(G)$. The last assertion is clear.
\end{proof}

At this juncture, we should state that the seminal paper~\cite{EJ} by Cockayne et al. already contains the following

\begin{Thm}(\cite{EJ})
\begin{itemize}
\item[(i)] If $G$ has $n$ vertices and no isolates, then $\gamma_t(G) \le n-\Delta(G)+1$.
\item[(ii)] If $G$ is connected and $\Delta(G) < n-1$, then $\gamma_t(G) \le n-\Delta(G)$.
\end{itemize}
\end{Thm}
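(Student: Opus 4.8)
The plan is to prove both bounds by choosing a vertex $v$ with $\deg_G(v)=\Delta(G)=:\Delta$ and exploiting the fact that $v$ by itself totally dominates all of $N(v)$, so that only the vertices of $\{v\}\cup W$, where $W:=V(G)\setminus N[v]$, remain to be handled. Since $|N[v]|=\Delta+1$, we have $|\{v\}\cup W|=n-\Delta$, and the whole game is to totally dominate these $n-\Delta$ vertices economically while keeping $v$ in the set.

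For part (i) I would argue directly. Let $I$ be the set of vertices of $W$ that are isolated in the induced subgraph $<\!W\!>$; since $G$ has no isolated vertices, every $w\in I$ has all of its neighbors inside $N(v)$, so I can pick one neighbor $\phi(w)\in N(v)$ for each such $w$ and collect them into a set $I^\ast$ with $|I^\ast|\le|I|$. The complementary set $J:=W\setminus I$ induces a subgraph with no isolated vertices, so $J$ totally dominates itself. I then take $D=\{v\}\cup I^\ast\cup J$: here $v$ dominates $N(v)$, $I^\ast$ dominates both $I$ and $v$, and $J$ dominates $J$; a quick check confirms every vertex of $D$ also has a neighbor in $D$. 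This yields $|D|\le 1+|I|+|J|=1+|W|=n-\Delta$ whenever $I\ne\emptyset$. The single remaining case is $I=\emptyset$, where nothing in $W$ can dominate $v$; there I must spend one extra vertex $u\in N(v)$ to cover $v$, giving $D=\{v,u\}\cup W$ of size $n-\Delta+1$. This is exactly where the $+1$ in the bound becomes unavoidable, and the two cases together give $\gamma_t(G)\le n-\Delta+1$.

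For part (ii) the extra hypotheses let me remove that $+1$, and here a spanning-tree argument is cleanest. I would choose a spanning tree $T$ of the (connected) graph $G$ that contains all $\Delta$ edges incident to $v$, and root it at $v$. Then $v$ has $\Delta$ children, and the $\Delta$ subtrees hanging below them are pairwise vertex-disjoint; since each finite subtree has at least one deepest vertex with no children, each contributes at least one leaf of $T$, so $T$ has at least $\Delta$ leaves and hence at most $n-\Delta$ internal vertices. The standard fact I would invoke is that the internal vertices of a tree form a total dominating set provided there are at least two of them: every leaf is dominated by its (internal) parent, every internal non-root vertex by its parent, and the root by any internal child. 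Finally, the hypothesis $\Delta<n-1$ guarantees that no vertex is adjacent to all others, so $T$ is not a star and therefore has at least two internal vertices; this set is a total dominating set of size at most $n-\Delta$, proving $\gamma_t(G)\le n-\Delta$.

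The step I expect to be the main obstacle is the careful bookkeeping around the vertices of $W$ that are isolated in $<\!W\!>$ --- equivalently, the vertices whose neighborhoods lie entirely inside $N(v)$. In part (i) these vertices are what force the $+1$ and must be dominated one-by-one from $N(v)$; in part (ii) they are precisely the obstruction that the connectivity and degree hypotheses are designed to defeat through the leaf count of the rooted spanning tree. Verifying that the constructed sets are genuinely total dominating sets --- in particular that every chosen vertex itself has a neighbor in the set, the feature distinguishing total domination from ordinary domination --- is the routine but essential remainder of the argument.
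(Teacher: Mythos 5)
Your proposal is correct, but there is nothing in the paper to compare it against: the author states this theorem as quoted background from the seminal paper of Cockayne, Dawes, and Hedetniemi~\cite{EJ} and gives no proof (he explicitly remarks that the duplication of results from~\cite{EJ} is for self-containedness). Judged on its own, your argument is sound. In part (i), the key bookkeeping works because vertices of $I$ have \emph{no} neighbors inside $W$, so deleting them leaves every vertex of $J$ with a neighbor in $J$; hence $\{v\}\cup I^{\ast}\cup J$ is genuinely a total dominating set when $I\neq\emptyset$, and your count there even gives $n-\Delta(G)$, stronger than the claimed bound, while the case $I=\emptyset$ (where nothing outside $N(v)$ can dominate $v$) correctly isolates the source of the $+1$. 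In part (ii), the spanning-tree route is valid: the star of edges at $v$ is acyclic and so extends to a spanning tree $T$ of the connected graph $G$ (worth one explicit sentence); the $\Delta(G)$ pairwise disjoint subtrees below the root each contain a leaf of $T$, so $T$ has at most $n-\Delta(G)$ internal vertices; and a total dominating set of the spanning subgraph $T$ is one of $G$. Two small points deserve to be made explicit: first, the hypotheses of (ii) force $n\geq 3$ and $\Delta(G)\geq 2$ (connectivity with $\Delta(G)<n-1$ excludes $K_2$), which is what makes the root itself internal; second, your ``standard fact'' is cleanest justified by noting that the internal vertices of a tree induce a connected subtree, so if there are at least two of them, each internal vertex has an internal neighbor, and each leaf's unique neighbor is internal whenever the tree has at least three vertices. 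With those clarifications, both bounds are fully proved, and your construction in (i) moreover exhibits exactly when the weaker bound $n-\Delta(G)+1$ is actually needed.
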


Our present focus is on total domination value; the need for coherence and being (by and large)
self-contained renders unavoidable some duplication of results in~\cite{EJ}.

\begin{Prop}\label{degree n-2}
Let $G$ be a graph of order $n$ such that $\Delta(G)=n-2$.
Then $\gamma_t(G)=2$ and $TDV(v) \leq n-2$ for any $v \in V(G)$. Further, if $\deg(v)=n-2$, then $TDV(v)=|N(w)|$ where $vw \notin E(G)$.
\end{Prop}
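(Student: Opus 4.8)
The plan is to exploit the fact that a vertex of degree $n-2$ fails to be adjacent to exactly one other vertex, together with the observation that $\gamma_t(G)=2$ forces every relevant $\gamma_t$-set to consist of just two vertices. First I would fix a vertex $u$ with $\deg_G(u)=\Delta(G)=n-2$ and let $w$ be the unique vertex of $V(G)$ not lying in $N[u]$, so that $N(u)=V(G)\setminus\{u,w\}$. The key structural observation, which I expect to drive the entire argument, is the inclusion $N(w)\subseteq N(u)$: indeed $w\notin N(w)$ and $u\notin N(w)$ (the latter since $uw\notin E(G)$), whence $N(w)\subseteq V(G)\setminus\{u,w\}=N(u)$. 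Because $G$ has no isolated vertices, $w$ has a neighbor $x$, and this inclusion then yields $x\in N(u)$ as well; thus $\{u,x\}$ totally dominates every vertex, since $u$ dominates $V(G)\setminus\{u,w\}$ while $x$ dominates both $u$ and $w$. As no single vertex can totally dominate itself, we always have $\gamma_t(G)\ge 2$, and therefore $\gamma_t(G)=2$. Granting this, the bound $TDV(v)\le n-2$ for every $v$ is immediate from Lemma~\ref{gamma-t 2}, which gives $TDV(v)\le\deg(v)\le\Delta(G)=n-2$.

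For the final formula, I would fix a vertex $v$ with $\deg(v)=n-2$ and let $w$ be its unique non-neighbor, so $vw\notin E(G)$ and $N(v)=V(G)\setminus\{v,w\}$. Since $\gamma_t(G)=2$, every $\gamma_t$-set containing $v$ has the form $\{v,y\}$ with $y\neq v$, and I would characterize exactly which choices of $y$ work. All vertices of $V(G)\setminus\{v,w\}$ lie in $N(v)$ and are thus dominated by $v$ automatically, so $\{v,y\}$ is a TDS if and only if it dominates the two ``hard'' vertices $v$ and $w$: dominating $w$ requires $y\in N(w)$, and dominating $v$ (and $y$ itself) requires $y\in N(v)$. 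The crux is again the inclusion $N(w)\subseteq N(v)$ established above, by which the first condition already forces the second; the two requirements collapse into the single condition $y\in N(w)$. Hence the $\gamma_t$-sets containing $v$ are precisely the sets $\{v,y\}$ with $y\in N(w)$, and these are pairwise distinct, giving $TDV(v)=|N(w)|$.

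The main obstacle here is conceptual rather than computational: one must recognize that the two domination constraints on the partner vertex $y$ (that it be adjacent to $v$ and that it be adjacent to $w$) are \emph{not} independent. Once one notes that the only non-neighbor of $v$ is $w$, and that $w$ is in turn a non-neighbor of $v$, the inclusion $N(w)\subseteq N(v)$ becomes automatic and the enumeration reduces to counting the neighbors of $w$. The rest is bookkeeping.
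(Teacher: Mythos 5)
Your proposal is correct and takes essentially the same approach as the paper: both arguments produce $\gamma_t(G)=2$ by exhibiting a common neighbor of the maximum-degree vertex and its unique non-neighbor $w$, apply Lemma~\ref{gamma-t 2} for the bound $TDV(v)\le n-2$, and count the $\gamma_t$-sets containing $v$ via the identity $N(v)\cap N(w)=N(w)$, which is exactly your inclusion $N(w)\subseteq N(v)$. Your write-up merely makes explicit the justification of that inclusion (and bypasses the paper's passing appeal to connectedness), but the substance is identical.
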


\begin{proof}
Let $\deg(v)=n-2$, so there's only one vertex $w$ such that $vw \notin E(G)$. Since $G$ is without isolated vertices and $\Delta(G)=n-2$ (and thus connected), $w$ is adjacent to at least one of the
vertices, say $z$, in $N(v)$. Clearly, $\{v, z\}$ is a $\gamma_t$-set; so $\gamma_t=2$.
Noticing $N(v)\cap N(w)=N(w)$, we see that the number of $\gamma_t$-sets containing $v$ is $|N(w)|$;
i.e., $TDV(v)=|N(w)|$. Also, Lemma \ref{gamma-t 2} implies that $TDV(v) \leq \deg(v) \leq n-2$ for any $v \in V(G)$. \hfill
\end{proof}

\begin{Prop}\label{tau gamma-t 2}
If $G$ has order $n$ with $\gamma_t(G)=2$ and $\Delta(G) \le n-2$,
then $\displaystyle \tau(G) \le {n\choose 2} - \left\lceil \frac{n}{2} \right\rceil$. This bound is sharp.
\end{Prop}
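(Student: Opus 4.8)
The plan is to bound $\tau(G)$ by the number of edges of $G$ and then pass to the complement. First I would show $\tau(G)\le |E(G)|$, which follows at once from results already in hand: by Lemma~\ref{gamma-t 2}, $TDV_G(v)\le \deg_G(v)$ for every $v$, so summing over all vertices and invoking Observation~\ref{observation} gives
\[
2\,\tau(G)=\gamma_t(G)\,\tau(G)=\sum_{v\in V(G)}TDV_G(v)\le\sum_{v\in V(G)}\deg_G(v)=2\,|E(G)|,
\]
whence $\tau(G)\le |E(G)|$. Conceptually this is clear: every $\gamma_t$-set $\{u,v\}$ must be an edge, since $u$ needs a neighbor \emph{inside} the set and $v$ is its only candidate, so the $\gamma_t$-sets inject into $E(G)$.

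Next I would exploit $\bar G$. Since $\Delta(G)\le n-2$, every vertex of $G$ has a non-neighbor, i.e.\ $\bar G$ has no isolated vertex. A graph on $n$ vertices with no isolated vertex has at least $\lceil n/2\rceil$ edges, because its edge set is an edge cover and each edge covers at most two vertices; hence $|E(\bar G)|\ge \lceil n/2\rceil$. Combining this with $|E(G)|=\binom{n}{2}-|E(\bar G)|$ yields
\[
\tau(G)\le |E(G)|=\binom{n}{2}-|E(\bar G)|\le\binom{n}{2}-\left\lceil\frac n2\right\rceil,
\]
which is the asserted bound.

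For sharpness it suffices, by the paper's convention, to exhibit one graph attaining equality, and I would take $n=2m$ and $G=\overline{mK_2}$, the complete multipartite graph $K_{2,\dots,2}$ (``cocktail party graph''), in which each vertex is adjacent to all others except a single partner. Here $\Delta(G)=n-2$ and $\gamma_t(G)=2$: no singleton totally dominates, while any two non-partner vertices $u,v$ give a $\gamma_t$-set. I would then verify that \emph{every} edge of $G$ is a $\gamma_t$-set: writing $\bar u,\bar v$ for the partners of $u,v$, non-partnership forces $\{u,\bar u\}\cap\{v,\bar v\}=\varnothing$, so $N(u)\cup N(v)=V(G)$. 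Thus $\tau(G)=|E(G)|=\binom{n}{2}-m=\binom{n}{2}-\lceil n/2\rceil$, and equality holds.

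The argument is short and I do not expect a serious obstacle; the only points needing care are the reduction $\tau(G)\le|E(G)|$ (cleanest through Lemma~\ref{gamma-t 2} and Observation~\ref{observation}) and the edge-cover estimate $|E(\bar G)|\ge\lceil n/2\rceil$. The chain of inequalities also makes transparent why equality is delicate: it forces $\tau(G)=|E(G)|$ \emph{and} $|E(\bar G)|=\lceil n/2\rceil$ at the same time, which is exactly why the natural witness is the complement of a perfect matching.
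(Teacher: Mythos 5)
Your proposal is correct and follows essentially the same route as the paper: $\gamma_t$-sets are edges (so $\tau(G)\le\binom{n}{2}-|E(\overline{G})|$), the degree hypothesis forces $|E(\overline{G})|\ge\lceil n/2\rceil$, and sharpness comes from the $(n-2)$-regular graph on even $n$, i.e.\ your cocktail party graph $\overline{mK_2}$. The only cosmetic differences are that the paper phrases the bound $|E(\overline{G})|\ge\lceil n/2\rceil$ as a count of edge deletions from $K_n$ rather than your (equivalent, arguably cleaner) edge-cover argument in $\overline{G}$, and for sharpness it computes $\tau$ via $TDV(v)=n-2$ and Observation~\ref{observation} instead of your direct count $\tau=|E(G)|$.
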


\begin{proof}
Since $\gamma_t(G)=2$, choosing a $\gamma_t(G)$-set is the same as choosing an edge of $G$. Thus,
\begin{equation}\label{tau}
\tau(G) \le {n\choose 2}-|E(\overline{G})|
\end{equation}
But $\displaystyle |E(\overline{G})| \ge \left\lceil\frac{n}{2}\right\rceil$:
Consider the minimum number of edges to delete from $K_n$ to get to $G$.
The deletion of one edge reduces the degree by one to a pair of vertices. To
ensure $\Delta(G) \le n-2$, a minimum of $\frac{n}{2}$ edge deletions
must be made if $n$ is even, and a minimum of
$\frac{n-1}{2}+1=\frac{n+1}{2}$ edge deletions must be made if $n$
is odd. Thus, by inequality~(\ref{tau}), $\tau(G) \le {n\choose 2} - \lceil
\frac{n}{2} \rceil$. \\

To see the sharpness of this bound, let $G$ be the $(n-2)$-regular graph for any even $n \ge 4$.
Each vertex $v\in V(G)$ may be paired with any $w\in N(v)$ to form a $\gamma_t(G)$-set,
since any $u\notin\{v,w\}$ is adjacent to either $v$ or $w$. Thus $TDV(v)=\deg(v)=n-2$.
Observation \ref{observation} gives $n(n-2)=2\tau(G)$; i.e., $\tau(G)=\frac{n(n-2)}{2}$,
which equals ${n\choose 2} - \left\lceil \frac{n}{2} \right\rceil$. \hfill
\end{proof}

\begin{figure}[htbp]
\begin{center}
\scalebox{0.5}{\input{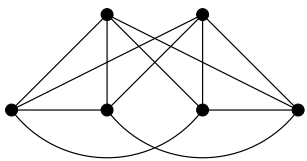}} \caption{Example showing the sharpness
of the upper bound for Proposition \ref{tau gamma-t 2} when $n=6$}\label{figure2}
\end{center}
\end{figure}

\begin{Thm}\label{theorem on Delta(G)=n-3}
Let $G$ be a graph of order $n \ge 4$ and $\Delta (G)=n-3$. Fix a vertex $v$ with $\deg(v)= \Delta(G)$.
\begin{itemize}
\item [(i)] If $G$ is disconnected, then $\gamma_t(G)=4$ and $TDV(v)=n-3$.
\item [(ii)] If $G$ is connected, then $\gamma_t(G)=2$ with $TDV(v) \le n-3$ or $\gamma_t(G)=3$ with $TDV(v)
\le (\frac{n-3}{2})^2+2(n-4)$.
\end{itemize}
\end{Thm}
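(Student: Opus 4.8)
The plan is to follow the dichotomy in the statement, and throughout to exploit that $\deg(v)=n-3$ forces $v$ to have exactly two non-neighbours, say $w_1$ and $w_2$. Since $w_i\notin N(v)$ we have $v\notin N(w_i)$, hence $N(w_i)\subseteq N(v)\cup\{w_{3-i}\}$. The whole argument will be organized around the sets $S_i:=N(v)\cap N(w_i)$, which record how the neighbourhood of $v$ interacts with the two ``blind spots'' $w_1,w_2$.

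For part (i), I would first show that a disconnected $G$ (with no isolated vertices) and $\deg(v)=n-3$ must be $G_1\sqcup K_2$, where $G_1$ is the component of $v$. Indeed, that component already contains $\{v\}\cup N(v)$, so $|V(G_1)|\ge n-2$, and the only way to avoid an isolated vertex among the remaining at most two vertices is for $w_1,w_2$ to form a $K_2$ component, forcing $|V(G_1)|=n-2$. In $G_1$ the vertex $v$ is then universal, so $\Delta(G_1)=|V(G_1)|-1$; Proposition \ref{degree n-1} gives $\gamma_t(G_1)=2$ and $TDV_{G_1}(v)=n-3$. Since $\gamma_t(K_2)=2$ and $\tau(K_2)=1$, Observation \ref{observation2} yields $\gamma_t(G)=4$ and $TDV_G(v)=TDV_{G_1}(v)\cdot\tau(K_2)=n-3$.

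For part (ii), since $G$ is connected with $\Delta(G)=n-3<n-1$, part (ii) of the theorem of Cockayne et al.\ recalled just above gives $\gamma_t(G)\le n-\Delta(G)=3$, and $\gamma_t(G)\ge 2$ always; so $\gamma_t(G)\in\{2,3\}$. If $\gamma_t(G)=2$, Lemma \ref{gamma-t 2} immediately gives $TDV(v)\le\deg(v)=n-3$. The substance is $\gamma_t(G)=3$. Here the first key step is that $\gamma_t(G)=3$ forbids every $2$-element TDS; applied to $\{v,a\}$ with $a\in N(v)$ (which dominates everything except possibly $w_1,w_2$), this shows $\{v,a\}$ is a TDS iff $a\in S_1\cap S_2$, so $S_1\cap S_2=\emptyset$. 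I would then enumerate the $3$-element TDSs $\{v,a,b\}$ by locating $a,b$ in the partition $\{v\}\cup N(v)\cup\{w_1,w_2\}$: both in $N(v)$ (Category A), exactly one in $\{w_1,w_2\}$ (Category B), or both in $\{w_1,w_2\}$ (impossible, as then $v$ is undominated). Because $v$ already dominates all of $N(v)$, the surviving constraints are merely that $\{a,b\}$ must meet each of $N(v),N(w_1),N(w_2)$, plus the no-isolated-vertex condition. Disjointness of $S_1,S_2$ then forces Category A to consist of exactly one vertex from $S_1$ and one from $S_2$, giving $|S_1|\,|S_2|$ sets, while a short check shows Category B requires $w_1w_2\in E(G)$ and contributes $|S_1|+|S_2|$.

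To finish I would bound the two contributions separately. By AM--GM together with $|S_1|+|S_2|\le |N(v)|=n-3$, Category A contributes at most $\left(\frac{n-3}{2}\right)^2$. For Category B, if both $S_1,S_2$ are nonempty then disjointness inside $N(v)$ forces $|S_i|\le n-4$, so the contribution is at most $2(n-4)$; the degenerate configurations where some $S_i=\emptyset$ are handled separately and bounded by $n-3\le 2(n-4)$, valid since the connected case forces $n\ge 5$. Summing gives $TDV(v)\le\left(\frac{n-3}{2}\right)^2+2(n-4)$. I expect the main obstacle to lie precisely in the $\gamma_t=3$ bookkeeping: correctly pinning down when a set $\{v,a,w_i\}$ is a genuine TDS (and that this forces $w_1w_2\in E(G)$), and sweeping up the empty-$S_i$ cases so that the two different optimizations — $|S_1|+|S_2|\le n-3$ for Category A versus $|S_i|\le n-4$ for Category B — can be applied in tandem.
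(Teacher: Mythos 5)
Your argument is correct, and while the final estimates coincide with the paper's, your decomposition is genuinely different. The paper proves the theorem by a four-way case analysis on the adjacency configuration of the two non-neighbours $\alpha,\beta$ of $v$ (your $w_1,w_2$): whether neither, exactly one, or both are adjacent to $N(v)$, and whether some vertex of $N(v)$ sees both; in each case it exhibits an explicit small TDS to pin down $\gamma_t\in\{2,3,4\}$, and the bound $\left(\frac{n-3}{2}\right)^2+2(n-4)$ emerges only in its Case 4 (split further on $\alpha\beta\in E(G)$ or not), via the same AM--GM estimate on $|N(\alpha)|\cdot|N(\beta)|$ --- which is your $|S_1|\,|S_2|$, since $N(w_i)\subseteq S_i\cup\{w_{3-i}\}$ --- plus at most $(|N(\alpha)|-1)+(|N(\beta)|-1)\le 2(n-4)$ sets through $\alpha$ or $\beta$. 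You instead settle $\gamma_t\in\{2,3\}$ for connected $G$ at the outset by invoking the recalled Cockayne--Dawes--Hedetniemi bound $\gamma_t(G)\le n-\Delta(G)=3$ (which the paper states just before the theorem but never actually uses in its proof), extract $S_1\cap S_2=\emptyset$ directly from the absence of $2$-element TDSs, and then run a single uniform enumeration (your Categories A and B) that subsumes the paper's Cases 2, 4.1 and 4.2 simultaneously: your observation that Category B is nonempty only when $w_1w_2\in E(G)$ reproduces exactly the paper's 4.1/4.2 dichotomy, and your degenerate case $S_2=\emptyset$ is the paper's Case 2, where the paper gets the slightly sharper $TDV(v)=|S_1|\le n-4$ (from $\deg(w_1)\le\Delta(G)$) but your cruder $|S_1|\le n-3\le 2(n-4)$ suffices because connectivity forces $n\ge 5$ (a connected graph on $n\ge 3$ vertices has $\Delta\ge 2$, so $n-3\ge 2$ --- worth one line in a final write-up). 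Your route buys exact counts, namely $TDV(v)=|S_1|\,|S_2|$ plus, when $w_1w_2\in E(G)$, an additional $|S_1|+|S_2|$, in place of the paper's case-by-case upper bounds, and fewer configurations to check; the paper's route buys self-containedness (no appeal to the $\gamma_t\le n-\Delta$ theorem) and makes visible the configurations with $TDV(v)=0$ that feed its subsequent remark and Figure \ref{zeroTDV-max-deg}. Part (i) is essentially identical in the two treatments: your structural step that disconnectedness forces $\{w_1,w_2\}$ to be a $K_2$ component (which the paper takes as its Case 1 configuration) followed by Proposition \ref{degree n-1} and Observation \ref{observation2} is precisely the paper's argument, with the small bonus that you actually verify the disconnected case admits no other shape.
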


\begin{proof}
Since $\deg(v)=n-3$, there are two vertices, say $\alpha$ and
$\beta$, such that $v \alpha, v \beta \not\in E(G)$. We consider
four cases.\\

\begin{figure}[htbp]
\begin{center}
\scalebox{0.5}{\input{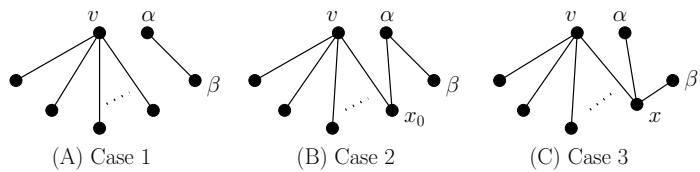}} \caption{Cases 1, 2, and 3
when $\Delta(G)=n-3$}\label{max1}
\end{center}
\end{figure}

\emph{Case 1. Neither $\alpha$ nor $\beta$ is adjacent to any
vertex in $N[v]$:} Since $G$ has no isolates, $\alpha \beta \in
E(G)$ (see (A) of Figure \ref{max1}). Let $G^{\prime}=<\!V(G)-
\{\alpha, \beta\}\!>$. Then $\deg_{G'}(v)=n-3$ with
$|V(G^{\prime})|=n-2$.
By Proposition~\ref{degree n-1}, $\gamma_t(G')=2$ and $TDV_{G'}(v)=n-3$; Observation \ref{observation2}, with $\gamma_t(<\!\{\alpha,\beta\}\!>)=2$ and $\tau(<\!\{\alpha,\beta\}\!>)=1$, yields $\gamma_t(G)=4$ and $TDV_G(v)=n-3$.\\

\emph{Case 2. Exactly one of $\alpha$ and $\beta$ is adjacent to a
vertex in $N(v)$:} Without loss of generality, assume that
$\alpha$ is adjacent to a vertex, say $x_0$, in $N(v)$. Since $G$
has no isolates, $\alpha \beta \in E(G)$ and $\alpha$ is a support
vertex of $G$. (See (B) of Figure \ref{max1}.) Thus $\alpha$
belongs to every $\gamma_t(G)$-set. First suppose $\gamma_t(G)=2$.
Then there exists a common neighbor of $v$ and
$\alpha$, say $w$, such that $\{\alpha, w\}$ is a $\gamma_t(G)$-set. Since $\alpha$ is a
support vertex of $G$ (i.e., $TDV(\alpha)=\tau(G)$) and $v \alpha
\not\in E(G)$, $TDV(v)=0$. Next consider $\gamma_t(G)>2$.
Since $\{v, x_0, \alpha\}$ is a $\gamma_t(G)$-set, $\gamma_t(G)=3$
and $TDV(v) =|N(v) \cap N(\alpha)| \le n-4$, where $n \ge 5$. (If $|N(v) \cap N(\alpha)|=n-3$,
then $\deg(\alpha)=n-2$ because $\beta \in N(\alpha)-N(v)$, contradicting the assumption that $\deg(v)=\Delta(G)=n-3$.)\\

\emph{Case 3. There exists a vertex in $N(v)$, say $x$, that is
adjacent to both $\alpha$ and $\beta$:} Notice that $n \ge 6$ in
this case, since $vx, \alpha x, \beta x \in E(G)$ and
$\deg(v)=\Delta (G)$ (see (C) of Figure \ref{max1}). Since $\{v,
x\}$ is a $\gamma_t(G)$-set, $\gamma_t(G)=2$. By Lemma \ref{gamma-t 2}, $TDV(v) \le \deg(v) \le n-3$ for any $v \in V(G)$.\\

\begin{figure}[htbp]
\begin{center}
\scalebox{0.5}{\input{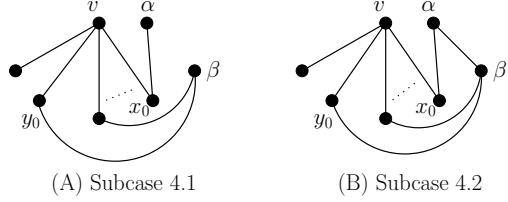}} \caption{Subcases 4.1
and 4.2 when $\Delta(G)=n-3$}\label{max2}
\end{center}
\end{figure}

\emph{Case 4. There exist vertices in $N(v)$ that are adjacent to
$\alpha$ and $\beta$, but no vertex in $N(v)$ is adjacent to both
$\alpha$ and $\beta$:} Let $x_0 \in N(v) \cap N(\alpha)$ and $y_0
\in N(v) \cap N(\beta)$. We consider two
subcases.\\

\emph{Subcase 4.1. $\alpha \beta \not\in E(G)$ (see (A) of Figure
\ref{max2}):} First, assume $\gamma_t(G)=2$. This is
possible when $\{x_0, y_0\}$ is a $\gamma_t(G)$-set satisfying
$x_0y_0 \in E(G)$ and $N(x_0) \cup N(y_0)=V(G)$. In this case, $n
\ge 7$. Notice that there's no $\gamma_t(G)$-set containing $v$
when $\gamma_t(G)=2$: no $u\in N(v)$ is adjacent to both $\alpha$ and $\beta$.
Thus $TDV(v)=0$. Second, assume
$\gamma_t(G)>2$. Since $\{v, x_0, y_0\}$ is a $\gamma_t(G)$-set,
$\gamma_t(G) =3$. Noticing that every $\gamma_t(G)$-set contains
a vertex in $N(\alpha)$ and a vertex in $N(\beta)$ and that $N(\alpha) \cap N(\beta) =\emptyset$, we see
$$TDV_G(v) = |N(\alpha)| \cdot |N(\beta)| \le
\left(\frac{|N(\alpha)|+|N(\beta)|}{2}\right)^2 \le
\left(\frac{n-3}{2}\right)^2,$$ where the first inequality is the
arithmetic-geometric mean inequality ($\frac{a+b}{2} \geq \sqrt{ab}$ for $a, b \geq 0$).

\emph{Subcase 4.2. $\alpha \beta \in E(G)$ (see (B) of Figure
\ref{max2}):} First assume $\gamma_t(G)=2$. This is
possible as in Subcase 4.1 or when $\{x_0, \alpha\}$ (resp. $\{y_0, \beta\}$) is a
$\gamma_t(G)$-set with $N(x_0) \cup N(\alpha)=V(G)$ (resp. $N(y_0)
\cup N(\beta)=V(G)$). In this case, $n \ge 6$. Notice that there's no $\gamma_t(G)$-set
containing $v$ with $\gamma_t(G)=2$, as in Subcase 4.1. Thus
$TDV_G(v)=0$. Second, consider $\gamma_t(G) > 2$. Since
$\{v, x_0, y_0\}$ is a $\gamma_t(G)$-set, $\gamma_t(G)=3$. The
number of \mbox{$\gamma_t(G)$-sets} containing $v$, but containing neither $\alpha$ nor $\beta$, is bounded above by
$(|N(\alpha)|-1)(|N(\beta)|-1) \le (\frac{n-3}{2})^2$ (by the arithmetic-geometric mean inequality).
The number of $\gamma_t(G)$-sets containing $v$ and $\alpha$ (resp. $\beta$) is bounded above
by $|N(\alpha)|-1 \le n-4$ (resp. $|N(\beta)|-1 \le n-4$). Thus $TDV(v) \le (\frac{n-3}{2})^2+2(n-4)$.
\end{proof}

Remark: From our proof of Theorem~\ref{theorem on Delta(G)=n-3} emerges a noteworthy fact
that one may have $TDV(v)=0$ even though $\deg_{G}(v)=\Delta(G)$: see graph in Figure~\ref{zeroTDV-max-deg}; also notice that the addition of the edge $\alpha \beta$ will not necessitate a change in its caption. See \cite{tdv} for a characterization of extremal $TDV$ values for trees.

\pagebreak

\begin{figure}[h]
\begin{center}
\scalebox{0.5}{\input{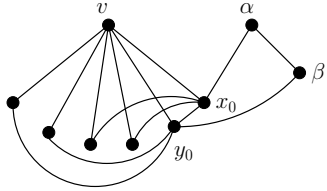}} \caption{$TDV(v)=0$, $\deg(v)=n-3$
is the unique maximum, and $\{x_0, y_0\}$ is the unique $\gamma_t$-set.}\label{zeroTDV-max-deg}
\end{center}
\end{figure}

\begin{figure}[htpb]
\begin{center}
\scalebox{0.5}{\input{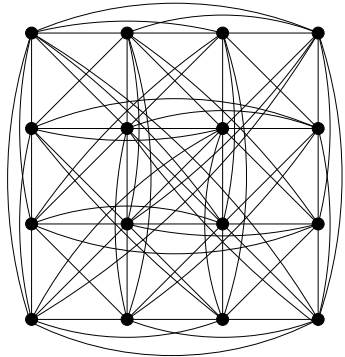}} \caption{The graph
induced by the queen's movement on the $4 \times
4$ chess board.}\label{Queen4}
\end{center}
\end{figure}

Illustration: As promised in the introduction, let's consider the
queen's movement on (for simplicity) a $3\!\times\!3$
``chessboard" and (separately) on a $4\!\times\!4$ ``chessboard".
Let's assume that the queen can move, as usual, any number of
squares horizontally, vertically, or diagonally (so long as there
are no other chess pieces lying in its way). Figure~\ref{Queen4} shows the graph induced by the queen's
movement on the $4\!\times\!4$ chessboard: two vertices are adjacent if and only if the queen -- the lone chess
piece on board -- can go between the corresponding squares
(represented by vertices) in one move. Observe that $\gamma_t=2$ for both graphs. On the $3\!\times\!3$ chessboard (not shown),
the center square has $TDV=8$ and each of the eight squares on the
periphery has $TDV=4$. On the $4\!\times\!4$ chessboard, the four
center squares each has $TDV=3$ and each of the twelve squares on
the periphery has $TDV=1$. Checking the forgoing claims is a straightforward matter: Take the $4\!\times\!4$ chessboard, for example. It's easily seen that $\gamma_t=2$; it follows that $TDV(v)=|\{w\in N(v): V(G)=N(w)\cup N(v)\}|$. By symmetry, it suffices to check only the $TDV$ of three vertices. (One can draw just the vertices for simplicity; pick a vertex $v_0$ to consider, and then consider each vertex $w_i$ dominated by a queen at $v_0$. Then $\{v_0, w_i\}$ forms a $\gamma_t$-set if $N(v_0) \cup N(w_i)=V(G)$.)\\


\section{Total domination value in complete $n$-partite graphs}

For a \emph{complete} $n$-partite graph $G$ -- where $n\geq 2$,
let $V(G)$ be partitioned into $n$-partite sets $V_1$, $V_2$, $\ldots$, $V_n$,
and let $a_i=|V_i|\geq 1$ for each $1\leq i \leq n$.

\begin{Prop}
Let $G$ be a complete $n$-partite graph with notation as specified above.
Then
$$\tau(G)=\frac{1}{2}\left[\left(\sum_{i=1}^{n} a_i\right)^2 - \sum_{i=1}^{n} a_i^2\right] \hskip .2in
\mbox{ and } \hskip .2in TDV(v)=\left(\sum_{i=1}^{n} a_i\right)-a_j \mbox{ if } v \in V_j.$$
\end{Prop}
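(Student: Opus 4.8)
The plan is to first pin down $\gamma_t(G)$ and obtain a clean combinatorial description of the $\gamma_t(G)$-sets, after which both formulas reduce to elementary counting. Write $N=\sum_{i=1}^{n}a_i=|V(G)|$, and recall that in a complete $n$-partite graph two vertices are adjacent precisely when they lie in distinct parts. First I would show $\gamma_t(G)=2$: a single vertex cannot form a TDS (it has no neighbor inside the set, so it is not itself dominated), while for any $u\in V_i$ and $w\in V_j$ with $i\neq j$ the pair $\{u,w\}$ is a TDS --- each vertex of $V_i$ is dominated by $w$, each vertex of $V_j$ by $u$, each vertex of any other part by both, and $u,w$ dominate each other since they lie in different parts. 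Since $n\geq 2$, such a pair exists, so $\gamma_t(G)=2$.

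The key structural step is the \emph{converse}: a two-element set $\{u,w\}$ is a $\gamma_t(G)$-set if and only if $u$ and $w$ lie in two distinct parts. The ``if'' direction is the computation just given; for the ``only if'' direction, note that if $u,w\in V_j$ then $u$ and $w$ are non-adjacent, so $u$ has no neighbor inside $\{u,w\}$ and the set fails to totally dominate $u$. Hence the $\gamma_t(G)$-sets are in bijection with the unordered pairs of vertices belonging to two different parts.

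Given this bijection, $\tau(G)$ counts such pairs directly: from the ${N\choose 2}$ pairs of vertices I subtract the $\sum_{i=1}^{n}{a_i\choose 2}$ pairs lying within a common part, giving
$$\tau(G)={N\choose 2}-\sum_{i=1}^{n}{a_i\choose 2}=\frac{1}{2}\Big(N^2-N-\sum_{i=1}^{n}a_i^2+\sum_{i=1}^{n}a_i\Big).$$
Substituting $\sum_{i=1}^{n}a_i=N$ cancels the linear terms and yields $\tau(G)=\tfrac12\big[(\sum_i a_i)^2-\sum_i a_i^2\big]$, as claimed. For a fixed $v\in V_j$, the $\gamma_t(G)$-sets containing $v$ are exactly the pairs $\{v,w\}$ with $w$ outside $V_j$; there are $N-a_j$ such choices of $w$, so $TDV(v)=N-a_j=(\sum_i a_i)-a_j$.

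I do not expect a genuine obstacle here: the only point requiring care is the ``only if'' half of the characterization of $\gamma_t(G)$-sets, i.e.\ remembering that \emph{total} domination forces the two selected vertices to dominate one another and hence to lie in distinct parts --- this is exactly what rules out same-part pairs and makes the subtraction ${N\choose 2}-\sum_i{a_i\choose 2}$ correct.
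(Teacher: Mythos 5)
Your proof is correct, and for the $\tau(G)$ formula it takes a genuinely different route from the paper. Both arguments establish $\gamma_t(G)=2$ and, in effect, that the $\gamma_t(G)$-sets containing a fixed $v\in V_j$ are exactly the pairs $\{v,w\}$ with $w\notin V_j$, giving $TDV(v)=\left(\sum_i a_i\right)-a_j=\deg(v)$. The difference is the direction of the counting: you characterize \emph{all} $\gamma_t(G)$-sets as cross-part pairs (equivalently, edges of $G$) and count them directly by inclusion--exclusion, $\binom{N}{2}-\sum_i\binom{a_i}{2}$, so $\tau(G)$ comes first and $TDV$ is a by-product; the paper instead computes $TDV(v)$ for every vertex first and then \emph{derives} $\tau(G)$ from the double-counting identity $\sum_{v\in V(G)}TDV(v)=\tau(G)\,\gamma_t(G)$ (its Observation 2.1), summing $\sum_{j}\left(a_j\sum_i a_i-a_j^2\right)=2\tau(G)$. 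Your direct count is self-contained and arguably more elementary, at the cost of needing the explicit ``only if'' half of the characterization (same-part pairs fail total domination, which you handle correctly); the paper's route buys a template it reuses elsewhere --- once local values are known, the global count $\tau$ falls out of the identity without enumerating the sets at all. The two computations agree, since $\frac{1}{2}\left(N^2-N-\sum_i a_i^2+\sum_i a_i\right)=\frac{1}{2}\left[\left(\sum_i a_i\right)^2-\sum_i a_i^2\right]$ after substituting $N=\sum_i a_i$.
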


\begin{proof}
Any two vertices from different partite sets form a
$\gamma_t(G)$-set, so $\gamma_t(G)=2$.
If $v \in V_j$, then $TDV(v)=\deg(v)=(\sum_{i=1}^{n} a_i)-a_j$. By Observation \ref{observation}, we have the following

\begin{eqnarray*}
\sum_{v \in V(G)} TDV(v) = \tau(G)\gamma_{t}(G) \label{n-partite} \hskip .5in
\Longrightarrow \hskip .5in \sum_{j=1}^{n}\sum_{v\in V_j} TDV(v) =2 \tau(G), \\
\sum_{j=1}^{n}\left(a_j\sum_{i=1}^{n} a_i -a_{j}^{2}\right) =2\tau(G) \ \
\Longrightarrow \ \ \left(\sum_{i=1}^{n} a_i\right)\left(\sum_{j=1}^{n} a_j\right)-\sum_{j=1}^{n} a_j^2 =2\tau(G),
\end{eqnarray*}
and the formula claimed for $\tau(G)$ follows.\hfill
\end{proof}

\vspace{.2in}

If $a_i=1$ for each $i$, $1\leq i\leq n$, then $G=K_n$ is a complete graph.
\begin{Crlr}
If $G=K_n$, where $n \ge 2$, then $\tau(G)= {n \choose 2}$ and $TDV(v)=n-1$.
\end{Crlr}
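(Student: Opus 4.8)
The plan is to obtain the corollary as a direct specialization of the immediately preceding Proposition, using the observation recorded just before the statement: taking $a_i=1$ for every $i$ turns the complete $n$-partite graph into $K_n$. Thus $K_n$ is exactly the complete $n$-partite graph with $n$ singleton parts, and I may feed $a_1=\cdots=a_n=1$ into the two formulas that the Proposition supplies.

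First I would evaluate the two sums appearing in those formulas. Since there are $n$ parts, each of size one, $\sum_{i=1}^{n}a_i=n$ and $\sum_{i=1}^{n}a_i^2=n$. Substituting into the expression for $\tau(G)$ gives $\tau(G)=\tfrac12(n^2-n)=\tfrac{n(n-1)}{2}$, and I would close by recognizing this as $\binom{n}{2}$. For the $TDV$ formula, every vertex $v$ lies in some singleton part $V_j$ with $a_j=1$, so $TDV(v)=\bigl(\sum_{i=1}^{n}a_i\bigr)-a_j=n-1$, independent of which vertex is chosen.

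I expect no genuine obstacle here: the entire content is a substitution, and the only micro-step worth flagging is the identity $\tfrac{n(n-1)}{2}=\binom{n}{2}$. As an independent sanity check (and a fully self-contained alternative) I could instead argue directly: for $n\ge 2$ any two distinct vertices of $K_n$ already form a total dominating set, so $\gamma_t(K_n)=2$ and a $\gamma_t$-set is precisely an unordered pair of distinct vertices; hence $\tau(G)=\binom{n}{2}$, while each fixed vertex is paired with each of the remaining $n-1$ vertices, giving $TDV(v)=n-1$. Either route confirms the claim, and the hypothesis $n\ge 2$ is exactly what guarantees $\gamma_t=2$, so that such pairs exist.
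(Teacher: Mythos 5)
Your proposal is correct and matches the paper's (implicit) proof exactly: the paper presents this corollary as a direct specialization of the preceding Proposition with $a_1=\cdots=a_n=1$, which is precisely your substitution yielding $\tau(G)=\tfrac{1}{2}(n^2-n)=\binom{n}{2}$ and $TDV(v)=n-1$. Your direct sanity-check argument is a nice bonus but unnecessary; the substitution is the whole content.
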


If $n=2$, then $G=K_{a_1, a_2}$ is a complete bipartite graph.
\begin{Crlr}
If $G=K_{a_1, a_2}$, then $\tau(G)=a_1\!\!\cdot\!a_2$ and
\begin{equation*}
TDV(v)= \left\{
\begin{array}{ll}
a_2 & \mbox{ if } v \in V_1 \\
a_1 & \mbox{ if } v \in V_2 .
\end{array} \right.
\end{equation*}
\end{Crlr}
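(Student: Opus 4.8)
The plan is to obtain this corollary as the immediate $n=2$ specialization of the preceding Proposition, so essentially no new idea is required beyond substitution and simplification. Taking $n=2$ with partite sets $V_1, V_2$ of respective sizes $a_1, a_2$, the Proposition's formula for $\tau(G)$ reads
\[
\tau(G)=\frac{1}{2}\left[(a_1+a_2)^2-(a_1^2+a_2^2)\right].
\]
First I would expand $(a_1+a_2)^2 = a_1^2 + 2a_1a_2 + a_2^2$ and cancel the two square terms against $a_1^2 + a_2^2$, leaving $\frac{1}{2}(2a_1a_2) = a_1a_2$. This cancellation of the diagonal terms, leaving only the cross term, is the single algebraic step, and it presents no obstacle.

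For the $TDV$ values I would likewise substitute into the Proposition's formula $TDV(v) = \left(\sum_{i=1}^{2} a_i\right) - a_j = (a_1+a_2) - a_j$ for $v \in V_j$. When $v \in V_1$ this gives $(a_1+a_2) - a_1 = a_2$, and when $v \in V_2$ it gives $(a_1+a_2) - a_2 = a_1$, which is precisely the claimed piecewise formula.

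Since the corollary is a direct specialization, the only thing to watch is the bookkeeping described above (confirming that the cross term $2a_1a_2$ survives while the diagonal terms cancel), so I do not anticipate any genuine difficulty. Alternatively, one may bypass the Proposition for a fully self-contained argument: because $\gamma_t(K_{a_1,a_2})=2$ and every $\gamma_t$-set consists of one vertex from each part, counting $\gamma_t$-sets is the same as counting the edges of $K_{a_1,a_2}$, of which there are $a_1a_2$, so $\tau(G)=a_1a_2$; and each $v \in V_1$ belongs to exactly the $a_2$ sets $\{v,w\}$ with $w \in V_2$, giving $TDV(v)=a_2$, and symmetrically $TDV(v)=a_1$ for $v \in V_2$. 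Either route closes the proof.
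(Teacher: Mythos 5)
Your proposal is correct and matches the paper's route exactly: the paper states this corollary as an immediate $n=2$ specialization of the preceding Proposition (offering no separate proof), and your substitution $\tau(G)=\frac{1}{2}\left[(a_1+a_2)^2-(a_1^2+a_2^2)\right]=a_1a_2$ together with $TDV(v)=(a_1+a_2)-a_j$ for $v\in V_j$ is precisely that specialization. Your self-contained alternative via counting edges is also sound, but unnecessary here.
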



\section{Total domination value in cycles}

Let $C_n$ be a cycle on $n$ vertices, labeled $1$ through $n$ consecutively in
counter-clockwise order. Observe that, by symmetry (or vertex-transitivity),
$TDV$ must be constant on the vertices of $C_n$ for each $n$. We denote by
$TDM(G)$ the collection of all $\gamma_t(G)$-sets. For $n \ge 3$, recall (p.368, \cite{CZ})
\begin{equation*}
\gamma_t(C_n) = \left\{
\begin{array}{ll}
\ \frac{n}{2} & \mbox{ if } n \equiv 0  \mbox{ (mod 4)} \\
\lfloor \frac{n}{2} \rfloor +1 & \mbox{ otherwise .}
\end{array} \right.
\end{equation*}

\noindent {\bf Examples.} (a) $\gamma_t(C_4)=2$,
$TDM(C_4)=\{\{1, 2\}, \{2, 3\}, \{3, 4\}, \{4, 1\}\}$;
so $\tau(C_4)=4$ and $TDV(i)=2$ for each $i \in V(C_4)$.\\

(b) $\gamma_t(C_5)=3$, $TDM(C_5)=\{\{1, 2, 3\}, \{2, 3,4\}, \{3, 4,5\}, \{4, 5, 1\}, \{5, 1, 2\}\}$;
so $\tau(C_5)=5$ and $TDV(i)=3$ for each $i \in V(C_5)$.\\

\begin{Thm}\label{theorem on cycles} Let $n \geq 3$, then
\begin{equation*}
\tau(C_n) = \left\{
\begin{array}{lr}
\ 4 & \mbox{ if } n \equiv 0  \mbox{ (mod 4)} \\
\ n & \mbox{ if } n \equiv 1, 3  \mbox{ (mod 4)} \\
(\frac{n}{2})^2 & \mbox{ if } n \equiv 2  \mbox{ (mod 4)}
\end{array} \right.
\end{equation*}
\end{Thm}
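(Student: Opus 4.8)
The plan is to count $\gamma_t(C_n)$-sets directly by analyzing the structure of any minimum TDS on a cycle. First I would record the key structural constraint: a set $D$ totally dominates $C_n$ iff every vertex has a neighbor in $D$, which on a cycle means that in the cyclic sequence of vertices, one never sees three consecutive vertices outside $D$ (a ``gap'' of length $\geq 3$ leaves the middle vertex undominated), and moreover no vertex of $D$ may be isolated within $\langle D\rangle$ in the sense that each vertex (including those in $D$) needs a neighbor in $D$. The cleanest reformulation is: label each vertex $0$ (out of $D$) or $1$ (in $D$); then $D$ is a TDS iff the cyclic binary string contains no two consecutive $0$'s that would orphan a vertex --- more precisely, every maximal block of $0$'s has length $\leq 2$ (so no vertex is left undominated) \emph{and} every maximal block of $1$'s has length $\geq 2$ (so no chosen vertex lacks a neighbor in $D$, since an isolated $1$ between two $0$'s has no neighbor in $D$). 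So a TDS corresponds to a cyclic arrangement of alternating blocks of $1$'s (each of length $\geq 2$) and $0$'s (each of length $1$ or $2$).

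Next I would impose minimality. Writing $b$ for the number of $1$-blocks (equivalently the number of $0$-blocks, since they alternate cyclically) and letting the $0$-blocks have lengths $z_1,\dots,z_b \in \{1,2\}$ and the $1$-blocks have lengths $o_1,\dots,o_b \geq 2$, we have $\sum o_j + \sum z_j = n$ and $|D| = \sum o_j = n - \sum z_j$. To minimize $|D|$ we must \emph{maximize} $\sum z_j$ while keeping every $o_j \geq 2$; pushing each $z_j$ to its cap of $2$ and each $o_j$ down to $2$ gives the tight regime. This is where the four residue classes of $n \pmod 4$ separate, and I expect this optimization-plus-counting to be the main obstacle, because the feasible block patterns differ qualitatively in each class and one must confirm that the value $\gamma_t(C_n)$ quoted from \cite{CZ} is exactly realized. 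The plan is: for $n \equiv 0 \pmod 4$, show the only optimal pattern is $b = n/4$ blocks, each a $11$ followed by a $00$; such a configuration is rigid up to rotation, and counting the distinct rotations yields exactly $4$ sets. For $n \equiv 2 \pmod 4$, the optimum forces one $1$-block and one $0$-block to deviate (lengths summing to an odd leftover), and the count becomes a product of two independent positional choices, giving $(n/2)^2$. For $n \equiv 1, 3 \pmod 4$, I would show the minimum TDS consists of a single $1$-block of length $\lfloor n/2\rfloor + 1$ together with $0$-blocks filling the rest (the examples $C_4, C_5$ in the text anticipate this), and that rotating this single ``arc'' around the cycle gives exactly $n$ distinct sets.

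Concretely I would organize the write-up as four cases keyed to $n \bmod 4$, in each case (i) exhibiting an explicit optimal block pattern and verifying its cardinality equals the stated $\gamma_t(C_n)$, (ii) arguing no other block-length profile achieves that cardinality, and (iii) counting the rotational placements of the admissible profile(s). The verification in step (i) leans on the formula for $\gamma_t(C_n)$ already recalled before the theorem, so I may take it as given. The delicate points are the uniqueness arguments in step (ii) --- ruling out alternative distributions of the slack among the blocks --- and making sure the rotation count in step (iii) does not over- or under-count configurations fixed by a nontrivial rotation; for the highly symmetric $n \equiv 0$ case one must check that the period-$4$ pattern genuinely yields $4$ and not fewer sets, and for the $n \equiv 2$ case that the two deviating blocks can be placed independently without coincidental overlap. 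Once the case analysis is complete, assembling the piecewise formula is immediate.
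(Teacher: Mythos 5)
Your block encoding (cyclic $0/1$ strings in which every maximal $1$-block has length $\geq 2$ and every maximal $0$-block has length $\leq 2$) is correct and is essentially the same skeleton as the paper's proof, which views a $\gamma_t(C_n)$-set as inducing a disjoint union of paths $P_2$, $P_3$, $P_4$ separated by gaps of at most two vertices; your $n \equiv 0 \pmod 4$ case is also handled correctly. But two of your four structural claims are wrong. For $n \equiv 1, 3 \pmod 4$ you assert that a minimum TDS is a single arc of $\lfloor n/2\rfloor + 1$ consecutive vertices. For $n \geq 7$ that set is not even totally dominating: its complement is a single $0$-block of length $n - \lfloor n/2\rfloor - 1 \geq 3$, violating your own gap constraint ($C_5$ is the misleading small case, where the lone gap has length $2$; already for $C_7$ and $C_9$ the claim fails). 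The profiles actually forced by your optimization are: for $n = 4k+1$, $b = k$ blocks with all gaps equal to $2$ and $1$-block lengths $(3,2,\dots,2)$, i.e.\ one $P_3$ and $(k-1)$ $P_2$'s -- the paper counts these by the $n$ positions of the middle vertex of the $P_3$; for $n = 4k+3$, $b = k+1$ blocks with all $1$-blocks of length $2$ and gap lengths $(1,2,\dots,2)$ -- the paper counts these by the $n$ positions of the unique doubly-dominated vertex. These patterns have no nontrivial rotational symmetry, so $\tau = n$; your answer agrees only by accident of the final number.

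For $n \equiv 2 \pmod 4$ your claim that ``one $1$-block and one $0$-block deviate'' is parity-infeasible: one $o_j = 3$, one $z_j = 1$, and all other blocks at $2$ forces $n = 4b \equiv 0 \pmod 4$. The correct analysis (as in the paper) splits the optimum into \emph{three} disjoint profiles with $|D| = 2k+2$ for $n = 4k+2$: (i) $b = k$ with one $1$-block of length $4$ (one $P_4$), counted by $n$ placements; (ii) $b = k$ with two $1$-blocks of length $3$ (two $P_3$'s), counted by $n(k-1)/2$; (iii) $b = k+1$ with all $1$-blocks of length $2$ and two gaps of length $1$ ($(k+1)$ $P_2$'s), counted by $nk/2$. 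There is no single ``product of two independent positional choices'' enumerating all $\gamma_t$-sets; only the sum $n + n(k-1)/2 + nk/2 = (2k+1)^2 = (n/2)^2$ yields the stated value, and in profiles (ii) and (iii) one must divide by $2$ for the unordered pair of $P_3$'s (resp.\ short gaps), a point the paper handles explicitly. So your plan breaks precisely at the uniqueness-of-profile and counting steps you yourself flagged as delicate.
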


\begin{proof}

First, let $n=4k$, where $k\geq 1$. Here $\gamma_t=2k$;
a $\gamma_t$-set $\Gamma$ comprises $k$ $P_2$'s and $\Gamma$
is fixed by the choice of the first $P_2$. There are exactly
two $\gamma_t$-sets containing the vertex $1$, namely
the $\gamma_t$-set containing the path $\{n,1\}$ and
the different $\gamma_t$-set containing the path $\{1,2\}$;
by symmetry, there must be two $\gamma_t$-sets omitting the vertex $1$;
thus $\tau=4$. (Alternatively, since $TDV(1)=2$ and $\sum_{1}^{4k} 2= 2k\tau$
-- observations~\ref{observation}~and~\ref{isom invariant}, we also have $\tau=4$.)\\

Second, let $n=4k+1$, where $k\geq 1$. Here $\gamma_t=2k+1$;
a $\gamma_t$-set $\Gamma$ comprises $(k-1)$ $P_2$'s and one $P_3$.
And $\Gamma$ is fixed by the choice of the single $P_3$.
Choosing a $P_3$ is the same as choosing its middle vertex;
so the number of choices is simply $4k+1$. Thus $\tau=4k+1=n$.\\

Third, let $n=4k+2$, where $k\geq 1$. Here $\gamma_t=2k+2$;
a $\gamma_t$-set $\Gamma$ is constituted in exactly one of
the following three ways: 1) $\Gamma$ comprises $(k-1)$ $P_2$'s
and one $P_4$; 2) $\Gamma$ comprises $(k-2)$ $P_2$'s
and two $P_3$'s; 3) $\Gamma$ comprises $(k+1)$ $P_2$'s. \\

\emph{Case 1) $<\Gamma> \cong (k-1)P_2 \cup P_4$:} Note
that $\Gamma$ is fixed by the choice of the single $P_4$.
Choosing a $P_4$ is the same as choosing its initial vertex
in the counter-clockwise order. Thus $\tau=4k+2=n$.\\

\emph{Case 2) $<\Gamma> \cong (k-2)P_2 \cup 2 P_3$:} Note
that here $k\geq 2$ and $\Gamma$ is fixed by the placements
of the two $P_3$'s. There are $n=4k+2$ ways of choosing the first $P_3$,
as discussed. Consider the $P_{4k-5}$ (a sequence of $4k-5$ slots)
obtained as a result of cutting from $C_{4k+2}$ the $P_7$
centered about the first $P_3$. The initial vertex of
the second $P_3$ may be placed in the first slot of
any of the $\lceil\frac{4k-5}{4}\rceil=k-1$ subintervals
of the $P_{4k-5}$. As the order of selecting the
two $P_3$'s is immaterial, $\tau=\frac{(4k+2)(k-1)}{2}$.\\

\emph{Case 3) $<\Gamma> \cong (k+1)P_2$:} Note that,
since each $P_2$ totally dominates 4 vertices, there are
exactly two vertices, say $x$ and $y$, each of whom is
adjacent to two distinct $P_2$'s in $\Gamma$.
And $\Gamma$ is fixed by the placements of $x$ and $y$.
There are $n=4k+2$ ways of choosing $x$. Consider
the $P_{4k-3}$ (a sequence of $4k-3$ slots) obtained
as a result of cutting from $C_{4k+2}$ the $P_5$ centered about $x$.
Vertex $y$ may be placed in the first slot of
any of the $\lceil\frac{4k-3}{4}\rceil=k$ subintervals of
the $P_{4k-3}$. As the order of selecting the two vertices
$x$ and $y$ is immaterial, $\tau=\frac{(4k+2)k}{2}$. \\

Summing over the three disjoint cases
(note the second summand vanishes when $k=1$), we get \\
$$\hspace*{0in}\tau(C_n)=(4k+2)+\frac{(4k+2)(k-1)}{2}+\frac{(4k+2)k}{2}=(2k+1)^2=\left(\frac{n}{2}\right)^2.$$

Finally, let $n=4k+3$, where $k\geq 0$.
Here $\gamma_t=2k+2$; a $\gamma_t$-set $\Gamma$ comprises of
only $P_2$'s and is fixed by the placement of
the only vertex which is adjacent in two directions
(counter-clockwise and clockwise) to $P_2$(s) in $\Gamma$. Thus $\tau(C_n)=n$. \hfill
\end{proof}

\begin{Crlr}
Let $v\in V(C_n)$, then
\begin{equation*}
TDV(v) = \left\{
\begin{array}{lr}
\ 2 & \mbox{ if } n \equiv 0  \mbox{ (mod 4)} \\
\left\lfloor\frac{n}{2}\right\rfloor+1 & \mbox{ if } n \equiv 1, 3  \mbox{ (mod 4)} \\
\frac{n}{2} \cdot \frac{n+2}{4} & \mbox{ if } n \equiv 2  \mbox{ (mod 4)}
\end{array} \right.
\end{equation*}
\end{Crlr}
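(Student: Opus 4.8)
The plan is to exploit vertex-transitivity together with the counting identity of Observation~\ref{observation}, thereby reducing the computation of $TDV(v)$ to a division of quantities already in hand. Since $C_n$ is vertex-transitive, Observation~\ref{isom invariant} guarantees that $TDV(v)$ takes one and the same value, say $t$, at every vertex $v \in V(C_n)$; this was already noted at the opening of the section. Hence the left-hand side of Observation~\ref{observation} is simply $n\cdot t$, and that identity reads
$$n\cdot TDV(v) = \tau(C_n)\cdot \gamma_t(C_n),$$
so that
$$TDV(v) = \frac{\tau(C_n)\,\gamma_t(C_n)}{n}.$$

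First I would recall the two inputs to this quotient. The value of $\gamma_t(C_n)$ is the one displayed at the start of this section, namely $\gamma_t(C_n)=n/2$ when $n\equiv 0\pmod 4$ and $\gamma_t(C_n)=\lfloor n/2\rfloor+1$ otherwise; the value of $\tau(C_n)$ is supplied by Theorem~\ref{theorem on cycles}. Substituting these into the displayed quotient and simplifying in each residue class then yields the claimed formula, and I would organize the verification by the four classes modulo $4$, writing $n=4k$, $n=4k+1$, $n=4k+2$, and $n=4k+3$ respectively.

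The three ``non-quadratic'' cases are immediate. When $n=4k$ we obtain $TDV(v)=\frac{4\cdot 2k}{4k}=2$. When $n\equiv 1,3\pmod 4$ we have $\tau(C_n)=n$, so the factor $n$ cancels outright and $TDV(v)=\gamma_t(C_n)=\lfloor n/2\rfloor+1$ directly. The only case calling for genuine (if routine) arithmetic is $n\equiv 2\pmod 4$, and this is the step I expect to be the main obstacle --- not for any conceptual reason, but because one must reconcile the quadratic $(n/2)^2$ coming from Theorem~\ref{theorem on cycles} with the target form $\frac{n}{2}\cdot\frac{n+2}{4}$ and ensure the division by $n$ comes out to an integer. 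Writing $n=4k+2$, one has $\tau(C_n)=(2k+1)^2$ and $\gamma_t(C_n)=2k+2$, whence
$$TDV(v)=\frac{(2k+1)^2(2k+2)}{4k+2}=\frac{(2k+1)^2\cdot 2(k+1)}{2(2k+1)}=(2k+1)(k+1),$$
and a one-line check confirms $(2k+1)(k+1)=\frac{n}{2}\cdot\frac{n+2}{4}$. Collecting the four cases completes the proof.
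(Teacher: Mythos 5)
Your proposal is correct and is precisely the paper's argument: the paper's proof of this corollary likewise invokes Theorem~\ref{theorem on cycles}, Observation~\ref{observation}, Observation~\ref{isom invariant}, and vertex-transitivity of $C_n$, so that $TDV(v)=\tau(C_n)\gamma_t(C_n)/n$ in each residue class. Your case-by-case arithmetic, including the $(2k+1)(k+1)=\frac{n}{2}\cdot\frac{n+2}{4}$ check for $n\equiv 2\pmod 4$, is exactly the routine verification the paper leaves to the reader.
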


\begin{proof}
Use Theorem~\ref{theorem on cycles}, Observation~\ref{observation}, Observation~\ref{isom invariant}, and vertex-transitivity of $C_n$.
\end{proof}

\section{Total domination value in paths}

Let $P_n$ be a path on $n$ vertices, labeled $1$ through $n$ consecutively. Since $P_n$ is $C_n$ with an edge (but no vertices) deleted,
$\gamma_t(P_n)\geq \gamma_t(C_n)$. On the other hand, for $n\geq 4$,
there is a $\gamma_t$-set $\Gamma$ of $C_n$ omitting a pair of adjacent
vertices -- making $\Gamma$ a $\gamma_t$-set of $P_n$. Thus $\gamma_t(P_n)=\gamma_t(C_n)$; explicitly stated, for $n \ge 2$,
\begin{equation*}
\gamma_t(P_n) = \left\{
\begin{array}{ll}
\frac{n}{2} & \mbox{ if } n \equiv 0  \mbox{ (mod 4)} \\
\lfloor \frac{n}{2} \rfloor +1 & \mbox{ otherwise .}
\end{array} \right.
\end{equation*}

\noindent {\bf Examples.} (a) $\gamma_t(P_4)=2$, $TDM(P_4)=\{\{2,
3\}\}$; so $\tau(P_4)=1$ and
\begin{equation*}
TDV(i)= \left\{
\begin{array}{ll}
1 & \mbox{ if } i =2,3 \\
0 & \mbox{ if } i =1,4 .\\
\end{array} \right.
\end{equation*}

(b) $\gamma_t(P_6)=4$, $TDM(P_6)=\{\{ 2, 3, 4, 5\}, \{1, 2,4,5\},
\{1,2,5,6\}, \{2,3, 5, 6\}\}$; so $\tau(P_6)=4$, and
\begin{equation*}
TDV(i)= \left\{
\begin{array}{ll}
2 & \mbox{ if } i =1,3,4,6 \\
4 & \mbox{ if } i =2,5 .\\
\end{array} \right.
\end{equation*}

Remark: Note that $\tau(P_n) \le \tau(C_n)$ for $n \ge 3$ by
Proposition \ref{subgraph-tau}. In fact, we have

\begin{Thm}\label{theorem on paths}
For $n \ge 2$,
\begin{equation*} \tau(P_n) = \left\{
\begin{array}{lr}
\ 1 & \mbox{ if } n \equiv 0  \mbox{ (mod 4)} \\
\ \lfloor \frac{n}{4} \rfloor & \mbox{ if } n \equiv 1  \mbox{ (mod 4)} \\
\ (\lfloor \frac{n}{4} \rfloor +1)^2 & \mbox{ if } n \equiv 2
\mbox{ (mod 4)}\\
\ \lfloor \frac{n}{4}\rfloor + 2 & \mbox{ if } n \equiv 3 \mbox{
(mod 4)}
\end{array} \right.
\end{equation*}
\end{Thm}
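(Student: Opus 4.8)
The plan is to give a combinatorial characterization of the minimum total dominating sets of $P_n$ and then count them residue class by residue class modulo $4$, paralleling the proof of Theorem~\ref{theorem on cycles}. First I would record a structural lemma. Since a TDS $D$ induces a subgraph with no isolated vertex, inside the path $D$ decomposes into maximal runs of consecutive chosen vertices (call them \emph{blocks}), each a subpath on at least two vertices, separated by runs of unchosen vertices (\emph{gaps}). Total domination forces every gap lying strictly between two blocks to have length $\le 2$, since a gap of length $\ge 3$ would leave its middle vertex undominated; maximality of the blocks forces such an internal gap to have length $\ge 1$. At the ends, vertex $1$ (resp.\ $n$) has sole neighbor $2$ (resp.\ $n-1$), so $2, n-1 \in D$ and the boundary gap before the first block (resp.\ after the last) has length in $\{0,1\}$. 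Thus each TDS is encoded bijectively by a left-to-right sequence of block sizes $\ge 2$, internal gaps in $\{1,2\}$, and two boundary gaps in $\{0,1\}$.

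Next I would set $k=\lfloor n/4\rfloor$ and invoke the stated value of $\gamma_t(P_n)$. For a $\gamma_t$-set with $m$ blocks, the block sizes sum to $\gamma_t$ (so $\gamma_t \ge 2m$) while the gaps sum to $n-\gamma_t$; comparing the latter with the feasible range $[\,m-1,\; 2m\,]$ of total gap length pins down the admissible $m$ and the admissible multiset of block sizes. For $n=4k$ this forces $m=k$ with all blocks equal to $P_2$, all internal gaps equal to $2$, and both boundary gaps equal to $1$; the configuration is unique, so $\tau=1$. For $n=4k+1$ it forces $m=k$ blocks consisting of $k-1$ copies of $P_2$ and a single $P_3$, all gaps maximal, and the only freedom is the position of the $P_3$ among the $k$ blocks, giving $\tau=k=\lfloor n/4\rfloor$. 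For $n=4k+3$ it forces $m=k+1$ blocks, all $P_2$, with gap total one below the maximum, so exactly one of the $k$ internal gaps drops to $1$ or one of the two boundary gaps drops to $0$, giving $\tau=k+2=\lfloor n/4\rfloor+2$. Each of these is a direct ``locate the unique special feature'' count once the structural lemma is in hand.

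The main obstacle is the case $n\equiv 2 \pmod 4$, where the constraints admit two different block counts, $m=k$ and $m=k+1$, so the total must be assembled from two families while avoiding double counting. For $m=k+1$ all blocks are $P_2$ and the gaps must sum to $2k$; writing each internal gap as $1+e_j$ with $e_j\in\{0,1\}$ and each boundary gap as $b_\ell\in\{0,1\}$ turns this into counting $0/1$ vectors of length $k+2$ summing to $k$, which gives $\binom{k+2}{2}$ sets. For $m=k$ the gaps are forced to be maximal and the size surplus $2$ is realized either by one $P_4$ (that is $k$ positions) or by two $P_3$'s (that is $\binom{k}{2}$ positions). The key point I must verify is that the requirement that internal gaps be $\ge 1$ keeps these families genuinely disjoint: an internal gap of $0$ would merge two adjacent $P_2$ blocks into a $P_4$, which is precisely why the $m=k$ and $m=k+1$ enumerations do not overlap. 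Summing then yields $\binom{k+2}{2}+k+\binom{k}{2}=(k+1)^2=(\lfloor n/4\rfloor+1)^2$. Establishing this disjointness and checking the arithmetic is the crux of the argument; the other three residue classes are comparatively immediate.
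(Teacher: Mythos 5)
Your proposal is correct, and at its core it takes the same route as the paper: both classify the $\gamma_t(P_n)$-sets by the component structure of the induced subgraph $<\!\Gamma\!>$ (your ``blocks'' are exactly the paper's $P_2$'s, $P_3$'s, and $P_4$) and then count placements, arriving at the identical family counts $k$, ${k \choose 2}$, and $\binom{k+2}{2}$ in the $n\equiv 2$ case. Where you genuinely add something is in the bookkeeping. The paper simply asserts, in each residue class, which component structures occur (e.g.\ ``a $\gamma_t$-set $\Gamma$ is constituted in exactly one of the following three ways''), whereas your gap-sum argument --- block sizes summing to $\gamma_t$ force $m\le \gamma_t/2$, while the total gap length $n-\gamma_t$ must lie in $[m-1,2m]$ --- derives the admissible block counts and size multisets uniformly, which is a real gain in rigor and makes the four residue classes fall out of one mechanism. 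Your count $\binom{k+2}{2}$ for the $(k+1)P_2$ family, obtained as binary vectors of length $k+2$ summing to $k$, also cleanly packages what the paper computes as $1+2k+{k\choose 2}$ by splitting on how many end-vertices $\Gamma$ contains; the two are equal since $1+2k+\frac{k(k-1)}{2}=\frac{(k+1)(k+2)}{2}$. One small remark: the disjointness you flag as the crux is actually automatic --- the blocks are by definition the maximal runs of $\Gamma$, so the number and sizes of components are intrinsic to the set $\Gamma$ itself, and a set counted with $m=k+1$ blocks cannot also arise with $m=k$; no separate verification is needed beyond your observation that internal gaps are $\ge 1$. The only point you should make explicit when writing this up is the converse half of your ``bijective encoding'': that every block-gap sequence satisfying your constraints (blocks of size $\ge 2$, internal gaps in $\{1,2\}$, boundary gaps in $\{0,1\}$) really is a total dominating set; this is immediate (each chosen vertex has a neighbor inside its block, each unchosen vertex is adjacent to a block end), but it is what licenses counting configurations instead of sets.
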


\begin{proof}
First, let $n=4k$, where $k \ge 1$. Then $\gamma_t=2k$ and a
$\gamma_t$-set $\Gamma$ comprises $k$ $P_2$'s. In this case, every
two adjacent vertices in $\Gamma$ totally dominates four vertices,
and no vertex of $P_{4k}$ is totally dominated by more than one
vertex. Thus none of the end-vertices of $P_n$ belong to any
$\Gamma$, which contains and is fixed by $\{2,3\}$; hence $\tau=1$.\\

Second, let $n=4k+1$, where $k \ge 1$. Here $\gamma_t=2k+1$; a
$\gamma_t$-set $\Gamma$ comprises $(k-1)$ $P_2$'s and one $P_3$.
Since each component with cardinality $c$ in $<\Gamma>$ totally
dominates $c+2$ vertices, no end-vertices belong to any $\Gamma$.
Note that $\Gamma$ is fixed by the placement of the single $P_3$, and
there are $\lfloor\frac{n}{4}\rfloor$ slots where the $P_3$ may be placed; so $\tau=\lfloor\frac{n}{4}\rfloor$.\\

Third, let $n=4k+2$, where $k \ge 0$. Here $\gamma_t=2k+2$; a
$\gamma_t$-set $\Gamma$ is constituted in exactly one of the
following three ways: 1) $\Gamma$ comprises $(k-1)$ $P_2$'s and
one $P_4$; 2) $\Gamma$ comprises $(k-2)$ $P_2$'s and two $P_3$'s;
3) $\Gamma$ comprises $(k+1)$ $P_2$'s.\\

\emph{Case 1) $<\Gamma> \cong (k-1)P_2 \cup P_4$, where $k \ge
1$:} Note that $\Gamma$ is fixed by the placement of the single
$P_4$, and none of the end-vertices belong to any $\Gamma$, as each component with cardinality $c$ in $<\Gamma>$ totally
dominates $c+2$ vertices. The $P_4$ may be placed in one of the $\lfloor\frac{n}{4}\rfloor=k$ slots.\\

\emph{Case 2) $<\Gamma> \cong (k-2)P_2 \cup 2P_3$, where $k \ge
2$:} Note that again, none of the end-vertices belong to any $\Gamma$, and $\Gamma$ is fixed by the placements of the two
$P_3$'s into the $k$ available slots. Thus $\tau={k\choose 2} =\frac{k(k-1)}{2}$.\\

\emph{Case 3) $<\Gamma> \cong (k+1)P_2$, where $k \ge 0$:} A
$\Gamma$ containing both end-vertices of the path is unique (no vertex is doubly dominated). The
number of $\Gamma$ containing exactly one of the end-vertices (one doubly dominated vertex) is
$2{k \choose 1}=2k$. The number of $\Gamma$ containing none of the end-vertices (two doubly dominated vertices) is ${k \choose 2}= \frac{k(k-1)}{2}$. Thus
$\tau=1+2k+\frac{k(k-1)}{2}$.\\

Summing over the three disjoint cases (note that $\tau=1$ when $k=0$), we get
$$\tau(P_n)=k+\frac{k(k-1)}{2}+\left(1+2k+\frac{k(k-1)}{2} \right)=k^2+2k+1=(k+1)^2= \left( \left\lfloor \frac{n}{4}\right\rfloor+1 \right)^2.$$

Finally, let $n=4k+3$, where $k \ge 0$. Here $\gamma_t=2k+2$, and
a $\gamma_t$-set $\Gamma$ comprises $k+1$ $P_2$'s. There is no $\Gamma$ containing both end-vertices of $P_n$. The
number of $\Gamma$'s containing exactly one of the end-vertices (no doubly dominated vertex) of
the path is two. The number of $\Gamma$'s containing neither of the
end-vertices (one doubly dominated vertex) is $k$. Summing the two disjoint cases, we have $\tau(P_n)=k+2=\lfloor \frac{n}{4}\rfloor+2$.
\end{proof}

For the total domination value of a vertex on $P_n$, note that $TDV(v)=TDV(n+1-v)$ for \mbox{$1\leq v\leq n$} as $P_n$ admits the obvious automorphism carrying $v$ to $n+1-v$. More precisely, we have the classification result which follows. First, as an immediate consequence of Theorem \ref{theorem on paths}, we have

\begin{Crlr} \label{path on 4k}
Let $v \in V(P_{4k})$, where $k \ge 1$. Then
\begin{equation*}
TDV(v)= \left\{
\begin{array}{ll}
0 & \mbox{ if } v \equiv 0,1  \mbox{ (mod 4)} \\
1 & \mbox{ if } v \equiv 2,3  \mbox{ (mod 4)} .
\end{array} \right.
\end{equation*}
\end{Crlr}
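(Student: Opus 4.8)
The plan is to leverage the single most useful fact already in hand: Theorem~\ref{theorem on paths} tells us that $\tau(P_{4k})=1$ for every $k\ge 1$. Once we know there is exactly one $\gamma_t$-set, call it $\Gamma$, the total domination value degenerates into a membership indicator: since $0\le TDV(v)\le\tau(P_{4k})=1$ for every $v$, we have $TDV(v)=1$ precisely when $v\in\Gamma$ and $TDV(v)=0$ otherwise. So the entire content of the corollary reduces to identifying $\Gamma$ explicitly and reading off the residues mod $4$ of its elements.

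First I would recall the structure of $\Gamma$ extracted in the $n=4k$ case of the proof of Theorem~\ref{theorem on paths}: $\Gamma$ consists of $k$ pairwise-disjoint copies of $P_2$, no end-vertex of $P_{4k}$ lies in $\Gamma$, and $\Gamma$ is forced to contain the pair $\{2,3\}$. The reason is that each $P_2$ in $\langle\Gamma\rangle$ totally dominates exactly four consecutive vertices and, with $\gamma_t=2k$ components of total size $2k$ dominating all $4k$ vertices, there is no slack—no vertex can be doubly dominated and neither vertex $1$ nor vertex $4k$ can belong to $\Gamma$. Consequently the leftmost $P_2$ must be $\{2,3\}$ (to cover vertex $1$), the next must be $\{6,7\}$ (to cover vertex $5$), and so on. I would then assert the closed form $\Gamma=\{\,4j+2,\ 4j+3 : 0\le j\le k-1\,\}$, the set of all vertices congruent to $2$ or $3$ modulo $4$.

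Finally I would simply translate this description into the stated congruence dichotomy: a vertex $v$ satisfies $v\equiv 2$ or $3\pmod 4$ exactly when $v\in\Gamma$, giving $TDV(v)=1$; and $v\equiv 0$ or $1\pmod 4$ exactly when $v\notin\Gamma$, giving $TDV(v)=0$. Because $P_{4k}$ also admits the automorphism $v\mapsto n+1-v$, one can note as a consistency check that this automorphism sends residues $\{2,3\}$ to $\{4k-1,4k-2\}\equiv\{3,2\}$ and fixes the set $\Gamma$, matching the symmetry $TDV(v)=TDV(n+1-v)$ remarked before the corollary.

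I do not expect any genuine obstacle here: the corollary is labeled an immediate consequence of Theorem~\ref{theorem on paths}, and the only substantive step—pinning down that the unique $\gamma_t$-set is exactly $\{2,3,6,7,\dots,4k-2,4k-1\}$—is already implicit in that theorem's proof. The mildest point of care is to state clearly \emph{why} the forced placement starting at $\{2,3\}$ is the only possibility, i.e. the no-double-domination counting argument, rather than treating the explicit form of $\Gamma$ as self-evident.
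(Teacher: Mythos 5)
Your proposal is correct and follows essentially the same route as the paper: the paper offers no separate proof, calling the corollary an immediate consequence of Theorem~\ref{theorem on paths}, whose $n=4k$ case already establishes $\tau(P_{4k})=1$ via the no-double-domination counting and pins the unique $\gamma_t$-set as the one containing and fixed by $\{2,3\}$, i.e.\ $\{4j+2,4j+3 : 0\le j\le k-1\}$. Your write-up merely makes explicit the membership-indicator reading of $TDV$ that the paper leaves implicit, which is a reasonable elaboration rather than a different approach.
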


\begin{Prop}\label{path on 4k+1}
Let $v \in V(P_{4k+1})$, where $k \ge 1$. Write $v=4q+r$, where
$0 \le r < 4$. Then, noting $\tau(P_{4k+1})=k$, we have
\begin{equation}\label{tdv of path on 4k+1}
TDV(v)= \left\{
\begin{array}{ll}
q & \mbox{ if } v \equiv 0  \mbox{ (mod 4)} \\
0 & \mbox{ if } v \equiv 1 \mbox{ (mod 4) } \\
k-q & \mbox{ if } v \equiv 2 \mbox{ (mod 4) }\\
k & \mbox{ if } v \equiv 3 \mbox{ (mod 4) }
\end{array} \right.
\end{equation}
\end{Prop}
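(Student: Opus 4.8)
The plan is to read off an explicit list of all $\gamma_t$-sets of $P_{4k+1}$ from the proof of Theorem~\ref{theorem on paths}, and then, for each vertex $v$, simply count how many of these sets contain $v$. Recall from that proof that every $\gamma_t$-set $\Gamma$ of $P_{4k+1}$ consists of $(k-1)$ copies of $P_2$ together with a single $P_3$, contains no end-vertex, and is completely determined by the placement of its $P_3$; there are exactly $k$ of them, in agreement with $\tau(P_{4k+1})=k$. I will index them as $\Gamma_0,\dots,\Gamma_{k-1}$, where $\Gamma_a$ is the set in which exactly $a$ of the $P_2$'s precede the $P_3$.

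First I would pin down the precise component positions. Since $\gamma_t=2k+1$ and a component of $\langle\Gamma\rangle$ of order $c$ totally dominates exactly $c+2$ vertices, the $k$ components dominate $(2k+1)+2k=4k+1=n$ vertices counted with multiplicity; as this equals $n$, no vertex is doubly dominated, so the dominated intervals tile $\{1,\dots,n\}$. Forcing the leftmost component to cover vertex $1$ and proceeding rightward, I obtain for $\Gamma_a$ (with $0\le a\le k-1$): the left $P_2$'s occupy $\{4i+2,4i+3\}$ for $0\le i\le a-1$, the $P_3$ occupies $\{4a+2,4a+3,4a+4\}$, and the right $P_2$'s occupy $\{4m-1,4m\}$ for $a+2\le m\le k$.

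Next I would reorganize $\Gamma_a$ by residues modulo $4$. Combining the vertices $\equiv 3$ contributed by the left $P_2$'s and the $P_3$ (these are the $4i+3$) with those contributed by the right $P_2$'s (the $4m-1$) shows that $\Gamma_a$ contains \emph{every} vertex $\equiv 3\pmod 4$; meanwhile no vertex $\equiv 1\pmod 4$ ever appears; vertex $4t$ (residue $0$) lies in $\Gamma_a$ exactly when $a\le t-1$; and vertex $4t+2$ (residue $2$) lies in $\Gamma_a$ exactly when $a\ge t$. Counting the admissible indices $a\in\{0,\dots,k-1\}$ then gives $TDV=k$ for $v\equiv 3$, $TDV=0$ for $v\equiv 1$, $TDV=t$ for $v=4t\equiv 0$, and $TDV=k-t$ for $v=4t+2\equiv 2$. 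Writing $v=4q+r$ identifies $t$ with the quotient $q$ in the relevant residue classes, yielding the four stated cases.

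I expect the only delicate point to be the bookkeeping at the two ends: checking that the component-position formulas degenerate correctly when $a=0$ (no left $P_2$'s) and when $a=k-1$ (no right $P_2$'s), and that the residue-$3$ vertices knit together without gap precisely at the junctions $4a-1$, $4a+3$, $4a+7$ for every $a$, including these extreme values. Once the tiling and these boundary ranges are verified, the rest is a direct count.
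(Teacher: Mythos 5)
Your proof is correct, but it takes a genuinely different route from the paper's. The paper argues by induction on $k$: the base case $P_5$ is checked by hand, and in passing from $G=P_{4k+1}$ to $G'=P_{4k+5}$ each of the $k$ old $\gamma_t$-sets $\Gamma$ extends to $\Gamma\cup\{4k+3,4k+4\}$, while exactly one new set $\Gamma^*$, containing and determined by $\{4k+2,4k+3,4k+4\}$, appears; tracking which residue classes gain membership from $\Gamma^*$ (those $\equiv 2,3\pmod 4$) and checking the four new vertices yields the formula. You instead enumerate all $k$ sets in closed form: your tiling argument (coverage counted with multiplicity is $(2k+1)+2k=4k+1=n$, so no vertex is doubly dominated, no end-vertex can lie in $\Gamma$, and the component positions are forced) pins down $\Gamma_a$ exactly, and the four residue formulas are read off by counting admissible indices $a$. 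Your bookkeeping checks out: the left $P_2$'s at $\{4i+2,4i+3\}$, the $P_3$ at $\{4a+2,4a+3,4a+4\}$, and the right $P_2$'s at $\{4m-1,4m\}$ knit together correctly, including the degenerate cases $a=0$ and $a=k-1$; the counts ($t$ admissible indices for $v=4t$, $k-t$ for $v=4t+2$, all $k$ for $v\equiv 3$, none for $v\equiv 1$) match the statement, and your enumeration re-derives $\tau=k$ along the way. What each approach buys: yours is non-inductive and self-contained, exhibiting every $\gamma_t$-set explicitly so that all four cases are transparent simultaneously; the paper's induction is shorter given Theorem~\ref{theorem on paths} and, more importantly, serves as the template reused for the later cases $P_{4k+2}$ and $P_{4k+3}$, where a closed-form enumeration like yours would have to track several component types and end-vertex configurations at once.
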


\begin{proof}
We prove by induction on $k$. The base, $k=1$ case, is easily
checked. Assume that (\ref{tdv of path on 4k+1}) holds for
$G=P_{4k+1}$ and consider $G'=P_{4k+5}$. First, notice that each
$\Gamma$ of the $k$ $\gamma_t$-sets of $G$ induces a
$\gamma_t$-set $\Gamma'=\Gamma\cup\{4k+3, 4k+4\}$ of $G'$.
Additionally, $G'$ has the $\gamma_t$-set $\Gamma^*$ which
contains and is determined by $\{4k+2, 4k+3, 4k+4\}$. The presence
of $\Gamma^*$ implies that $TDV_{G'}(v)=TDV_{G}(v)+1$ for $v\leq
4k+1$ and $v\equiv 2 \mbox{ or } 3 \pmod 4$. Still, $TDV_{G'}(v)=TDV_G(v)$ for $v \le 4k+1$ and $v \equiv 0 \mbox{ or } 1 \pmod 4$ .Clearly,
$TDV_{G'}(4k+2)=1, TDV_{G'}(4k+3)=k+1, TDV_{G'}(4k+4)=k+1, \mbox{ and }
TDV_{G'}(4k+5)=0$.
\end{proof}

Remark: The proofs (we have) of the $TDV$ formulas in the propositions from this point onward are all inductive and have rather similar arguments. Thus, to avoid undue repetitiveness, we will offer ``sketches of proofs" and leave some details to the readers. \\

\begin{Prop}
Let $v \in V(P_{4k+2})$, where $k \ge 0$. Write $v=4q+r$, where $0 \le r < 4$. Then, noting $\tau(P_{4k+2})=(k+1)^2$, we have
\begin{equation*} TDV(v)= \left\{
\begin{array}{ll}
(k+1)q & \mbox{ if } v \equiv 0 \mbox{ (mod 4) } \\
(k+1)(q+1) & \mbox{ if } v \equiv 1 \mbox{ (mod 4) } \\
(k+1)(k+1-q) & \mbox{ if } v \equiv 2 \mbox{ (mod 4) } \\
(k+1)(k-q) & \mbox{ if } v \equiv 3 \mbox{ (mod 4) } \\
\end{array} \right.
\end{equation*}
\end{Prop}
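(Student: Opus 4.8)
The plan is to induct on $k$, with the base case $k=0$ (the path $P_2$, where $\tau=1$ and the unique $\gamma_t$-set is $\{1,2\}$) matching the formula directly. For the inductive step I would pass from $G=P_{4k+2}$ to $G'=P_{4(k+1)+2}=P_{4k+6}$ by appending the four vertices $4k+3,4k+4,4k+5,4k+6$ at the right end, in the same spirit as the proof of Proposition~\ref{path on 4k+1}. Since every $\gamma_t$-set of a path must contain the neighbor of each end-vertex, each $\gamma_t$-set of $G$ contains $4k+1$, so appending the block $\{4k+4,4k+5\}$ produces a valid $\gamma_t$-set of $G'$; this accounts for the total domination number climbing from $2k+2$ to $2k+4$ and supplies an ``appended family'' of $(k+1)^2$ sets.

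The key step is to organize \emph{all} $\gamma_t$-sets of $G'$ by the rightmost component $T$ of $\langle\Gamma'\rangle$ (the components of a minimum TDS of a path being $P_2$'s, $P_3$'s, and $P_4$'s). Writing $T=\{e-s+1,\dots,e\}$ with $e\in\{4k+5,4k+6\}$ and $s\in\{2,3,4\}$, the block $T$ totally dominates $\{e-s,\dots,4k+6\}$, so the remaining vertices of $\Gamma'$ form a $\gamma_t$-set of the initial subpath $P_{e-s-1}$; comparing $|\Gamma'|-s$ with $\gamma_t(P_{e-s-1})$ (both read off from Theorem~\ref{theorem on paths}) shows that the two ``wasteful'' terminal blocks ending at $4k+6$, of sizes $3$ and $4$, force a set strictly smaller than a total dominating set on the remaining subpath and hence cannot occur in a \emph{minimum} TDS. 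Exactly four terminal types survive: $T=\{4k+4,4k+5\}$ (the appended family, bijecting with the $\gamma_t$-sets of $P_{4k+2}$), $T=\{4k+5,4k+6\}$ (bijecting with $P_{4k+3}$), $T=\{4k+3,4k+4,4k+5\}$ (bijecting with $P_{4k+1}$), and $T=\{4k+2,4k+3,4k+4,4k+5\}$ (bijecting with $P_{4k}$, whose $\gamma_t$-set is unique). As a sanity check this gives $\tau(G')=(k+1)^2+(k+2)+k+1=(k+2)^2$, as required.

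With this decomposition I would compute $TDV_{G'}(v)$ for $v\le 4k+2$ as a sum of four contributions: the appended family reproduces $TDV_G(v)$ (the inductive hypothesis), while the other three families contribute, respectively, the membership counts of $v$ in the $\gamma_t$-sets of $P_{4k+3}$, of $P_{4k+1}$ (Proposition~\ref{path on 4k+1}), and in the unique $\gamma_t$-set of $P_{4k}$ (Corollary~\ref{path on 4k}, i.e.\ the vertices $\equiv 2,3\pmod 4$). A residue-by-residue tally then shows the increment to be $q,\ q+1,\ 2k+3-q,\ 2k+2-q$ for $v\equiv 0,1,2,3\pmod 4$, which is exactly $TDV_{P_{4k+6}}(v)-TDV_{P_{4k+2}}(v)$ predicted by the formula, and the four appended vertices are checked by hand (e.g.\ $4k+5$ lies in every $\gamma_t$-set, while $4k+6$ lies only in the $\{4k+5,4k+6\}$-family). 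The main obstacle is precisely this bookkeeping: one must have on hand the companion count for $P_{4k+3}$ (an identical terminal-component argument yields $TDV_{P_{4k+3}}(v)=0,\ q+1,\ k+2,\ k+1-q$ across the four residues), and one must treat with care the boundary index $v=4k+2$, where the length-four terminal block $\{4k+2,4k+3,4k+4,4k+5\}$ contributes an additional $+1$.
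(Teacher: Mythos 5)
Your argument is correct, but it follows a genuinely different route from the paper's. The paper partitions the $\gamma_t(P_{4k+2})$-sets by the full component structure of $\langle\Gamma\rangle$ --- $(k-1)P_2\cup P_4$, $(k-2)P_2\cup 2P_3$, and $(k+1)P_2$, the last split further by how many end-vertices lie in $\Gamma$ --- proves a separate count ($TDV'$, $TDV''$, $TDV'''_{1}$, $TDV'''_{2}$, $TDV'''_{3}$) for each family by its own small induction, and then sums the sub-formulas. You instead run a single induction on $k$ and classify the $\gamma_t(P_{4k+6})$-sets by the \emph{rightmost} component $T$ alone; your minimality comparison of $|\Gamma'|-s$ with $\gamma_t(P_{e-s-1})$ correctly kills the terminal blocks of sizes $3$ and $4$ ending at $4k+6$ (and the same comparison disposes of $s\ge 5$, so your parenthetical appeal to components being $P_2$'s, $P_3$'s, and $P_4$'s is dispensable), leaving exact bijections with the $\gamma_t$-sets of $P_{4k+2}$, $P_{4k+3}$, $P_{4k+1}$, and $P_{4k}$. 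I checked the arithmetic: the four families give $\tau(P_{4k+6})=(k+1)^2+(k+2)+k+1=(k+2)^2$, your residue-wise increments $q$, $q+1$, $2k+3-q$, $2k+2-q$ agree with the differences $TDV_{P_{4k+6}}(v)-TDV_{P_{4k+2}}(v)$ predicted by the formula, and the boundary values at $v=4k+2,\dots,4k+6$ and the base $P_2$ all come out right (for $k=0$ one should note the degenerate conventions: family $C$ is empty since $P_1$ has no TDS, and family $D$'s remainder is the empty set on $P_0$, consistent with the paper's list $TDM(P_6)$). The trade-off is that your recursion couples residue classes: you need $TDV_{P_{4k+3}}$, which the paper proves only \emph{afterwards} by a two-case end-vertex split; as you observe, the identical terminal-block argument yields it using only $P_{4(k-1)+3}$ and $P_{4k}$, so the coupled induction closes without circularity, but this companion derivation is genuinely part of your proof and must be carried out. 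What your route buys is uniformity and a built-in consistency check (it reproves the $n\equiv 2\pmod 4$ case of Theorem~\ref{theorem on paths} along the way, and it leans on Proposition~\ref{path on 4k+1} and Corollary~\ref{path on 4k} exactly where the paper's Case-2 induction does); what the paper's route buys is the finer per-configuration counts~(\ref{path 4k+2 case1})--(\ref{path 4k+2 case3}), which record how the $\gamma_t$-sets of each structural type distribute over the vertices rather than only their total.
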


\textit{Sketch of Proof:}
Let $\Gamma$ be a $\gamma_t(P_{4k+2})$-set for $k \ge 0$. We
consider three cases.\\

\textit{Case 1) $<\Gamma> \cong (k-1)P_2 \cup P_4$, where $k \ge
1$:} Denote by $TDV'(v)$ the number of such $\Gamma$'s containing
$v$. Noting $\tau=k$ in this case, we have
\begin{equation}\label{path 4k+2 case1}
TDV'(v)= \left\{
\begin{array}{ll}
q & \mbox{ if } v \equiv 0, 1  \mbox{ (mod 4)} \\
k-q & \mbox{ if } v \equiv 2,3 \mbox{ (mod 4) }
\end{array} \right.
\end{equation}
A proof proceeds by induction on $k$ is similar to the proof of Proposition~\ref{path on 4k+1}: No end-vertex belongs to any $\gamma_t$-set. There is one $\gamma_t$-set of $P_{4(k+1)+2}$ which contains $\{4k+2, 4k+3, 4k+4, 4k+5\}$, while there are $k$ $\gamma_t$-sets of $P_{4(k+1)+2}$ derived from $\gamma_t$-sets of $P_{4k+2}$ which do not contain $\{4k+2, 4k+3\}$. \\

\textit{Case 2) $<\Gamma> \cong (k-2)P_2 \cup 2P_3$, where $k \ge
2$:} Denote by $TDV''(v)$ the number of such $\Gamma$'s containing
$v$. Noting $\tau={k \choose 2}$ in this case and setting ${a
\choose b}=0$ when $a<b$, we have
\begin{equation}\label{path 4k+2 case2}
TDV''(v)= \left\{
\begin{array}{ll}
\vspace{.15in}
\sum_{j=1}^{q} (k-j) & \mbox{ if } v \equiv 0  \mbox{ (mod 4)} \\
\vspace{.1in}
{q \choose 2} & \mbox{ if } v \equiv 1 \mbox{ (mod 4) } \\
\vspace{.1in}
{k-q \choose 2} & \mbox{ if } v \equiv 2 \mbox{ (mod 4) }\\
{k \choose 2}-{q \choose 2} & \mbox{ if } v \equiv 3 \mbox{ (mod 4) }
\end{array} \right.
\end{equation}

A proof proceeds by induction on $k$: No end-vertex belongs to any
$\gamma_t(P_{4k+2})$-set. The base, $k=2$ case, is easily checked.
Assume that (\ref{path 4k+2 case2}) holds for $G=P_{4k+2}$ and
consider $G'=P_{4k+6}$. First, notice that each $\Gamma$ of the
${k \choose 2}$ $\gamma_t(G)$-set induces a $\gamma_t(G')$-set
$\Gamma'=\Gamma \cup \{4k+4, 4k+5\}$. Additionally, there are
$k={k+1 \choose 2}-{k \choose 2}$ $\gamma_t(G)$-sets $\Gamma^*$
which contain $\{4k+3, 4k+4, 4k+5\}$. Having the sets $\Gamma^*$ implies that $TDV''_{G'}(4k+2)=TDV''_G(4k+2)$ and
$TDV''_{G'}(v)=TDV''_G(v)+TDV_{P_{4k+1}}(v)$ for $v \leq 4k+1$,
where $TDV_{P_{4k+1}}(v)$ is given in formula (\ref{tdv of path on 4k+1}).
Therefore, for $v \leq 4k+2$, we have
\begin{equation}
TDV''_{G'}(v)= \left\{
\begin{array}{ll}
\vspace{.1in}
\left[\sum_{j=1}^{q} (k-j)\right] +q = \sum_{j=1}^{q}(k+1-j)& \mbox{if } v \equiv 0  \mbox{ (mod 4)}\\
\vspace{.1in}
{q \choose 2} & \mbox{if } v \equiv 1 \mbox{ (mod 4)}\\
\vspace{.1in}
{k-q \choose 2} + k-q= {k+1-q \choose 2}& \mbox{if } v \equiv 2 \mbox{ (mod 4)}\\
{k \choose 2}-{q \choose 2} +k={k+1 \choose 2}-{q \choose 2}&
\mbox{if } v \equiv 3 \mbox{ (mod 4)}
\end{array} \right.
\end{equation}
To finish this case, one separately checks that $TDV''_{G'}(4k+3)={k+1 \choose 2}-{k \choose 2}=k$,
$TDV''_{G'}(4k+4)=TDV''_{G'}(4k+5)={k+1 \choose 2}$, and
$TDV''_{G'}(4k+6)=0$.\\

\textit{Case 3) $<\Gamma> \cong (k+1)P_2$, where $k \ge 0$:}
Denote by $TDV'''(v)$ the number of such $\Gamma$'s containing
$v$. First, suppose both end-vertices belongs to each $\Gamma$;
denote by $TDV_1'''(v)$ the number of such $\Gamma$'s containing
$v$. Then there's a unique $\Gamma$ in this case, and we have
\begin{equation}\label{Case 3-1}
TDV'''_{1}(v)= \left\{
\begin{array}{ll}
1 & \mbox{ if } v \equiv 1, 2  \mbox{ (mod 4)} \\
0 & \mbox{ if } v \equiv 0,3 \mbox{ (mod 4) }
\end{array} \right.
\end{equation}

Second, suppose exactly one end-vertex belongs to each $\Gamma$;
denote by $TDV_2'''(v)$ the number of such $\Gamma$'s containing
$v$. Write $v = 4q+r$, $0 \le r < 4$. Then, noting $\tau=2k$ in
this case, we have
\begin{equation}\label{Case 3-2}
TDV'''_{2}(v)= \left\{
\begin{array}{ll}
q & \mbox{ if } v \equiv 0 \mbox{ (mod 4)} \\
k+q & \mbox{ if } v \equiv 1 \mbox{ (mod 4)} \\
2k-q & \mbox{ if } v \equiv 2 \mbox{ (mod 4)} \\
k-q & \mbox{ if } v \equiv 3 \mbox{ (mod 4)}
\end{array} \right.
\end{equation}
Notice each of $k$ $\gamma_t$-sets of $P_{4k+2}$ containing the left end-vertex is paired with
vertices $4k+4$ and $4k+5$ in $P_{4k+6}$; each of $k$ $\gamma_t$-sets of $P_{4k+2}$
containing the right end-vertex is paired with vertices $4k+5$ and $4k+6$ in $P_{4k+6}$.
Additionally, a $\gamma_t$-set of $P_{4k+2}$ containing both left and right end-vertices of $P_{4k+2}$
may be paired with vertices $4k+4$ and $4k+5$ in $P_{4k+6}$; there is also a $\gamma_t$-set in $P_{4k+6}$
containing vertices $4k+3, 4k+5, \mbox{ and } 4k+6$ (making $4k+4$ the sole doubly-dominated vertex).
Induction on $k$ readily verifies the claimed formula. \\

Third, suppose no end-vertex belongs to $\Gamma$; denote by
$TDV_3'''(v)$ the number of such $\Gamma$'s containing $v$. Write
$v = 4q+r$, $0 \le r < 4$. Then, noting $\tau={k \choose 2}$ in
this case, we have
\begin{equation} \label{Case 3-3}
TDV'''_{3}(v)= \left\{
\begin{array}{ll}
\vspace{.1in}
{q \choose 2} & \mbox{ if } v \equiv 0 \mbox{ (mod 4)} \\
\vspace{.1in}
{k \choose 2}-{k-q \choose 2} & \mbox{ if } v \equiv 1 \mbox{ (mod 4)} \\
\vspace{.1in}
{k \choose 2}-{q \choose 2} & \mbox{ if } v \equiv 2 \mbox{ (mod 4)} \\
{k-q \choose 2} & \mbox{ if } v \equiv 3 \mbox{ (mod 4)}
\end{array} \right.
\end{equation}
Notice each of the ${k \choose 2}$ $\gamma_t$-sets of $P_{4k+2}$ containing neither
end-vertex is paired with vertices $4k+4$ and $4k+5$ in $P_{4k+6}$. Additionally,
each of the $k$ $\gamma_t$-sets of $P_{4k+2}$ containing the right end-vertex of $P_{4k+2}$
may be paired with vertices $4k+4$ and $4k+5$ in $P_{4k+6}$ (making $4k+3$ one of the two doubly-dominated vertices).
Induction on $k$ again readily verifies the claimed formula.\\

Summing over the three disjoint cases (\ref{Case 3-1}), (\ref{Case
3-2}), and (\ref{Case 3-3}) for $<\Gamma> \cong (k+1)P_2$ -- i.e., $\displaystyle TDV'''(v)=\sum_{i=1}^{3}TDV'''_{i}(v)$, we have
\begin{equation}\label{path 4k+2 case3}
TDV'''(v)=\left\{
\begin{array}{ll}
\vspace{.08in}
q + {q\choose 2}& \mbox{if } v \equiv 0 \mbox{ (mod 4)}\\
\vspace{.08in}
1+k+q+{k \choose 2}-{k-q \choose 2}& \mbox{if } v \equiv 1 \mbox{ (mod 4)}\\
\vspace{.08in}
1+2k-q+{k \choose 2}-{q \choose 2}& \mbox{if } v \equiv 2 \mbox{ (mod 4)}\\
\vspace{.08in}
k-q+ {k-q \choose 2}& \mbox{if } v \equiv 3 \mbox{ (mod 4)}
\end{array} \right.
\end{equation}
Now, sum over (\ref{path 4k+2 case1}), (\ref{path 4k+2 case2}), and (\ref{path 4k+2 case3}) -- namely $TDV(v)=TDV'(v)+TDV''(v)+TDV'''(v)$ -- to reach the formula claimed in this proposition. \hfill $\square$\\

\begin{Prop}
Let $v \in V(P_{4k+3})$, where $k \ge 0$. Write $v=4q+r$, where $0 \leq r < 4$. Then, noting $\tau(P_{4k+3})=k+2$, we have
\begin{equation*}
TDV(v)= \left\{
\begin{array}{ll}
0 & \mbox{ if } v \equiv 0 \mbox{ (mod 4) } \\
q+1 & \mbox{ if } v \equiv 1 \mbox{ (mod 4) } \\
k+2 & \mbox{ if } v \equiv 2 \mbox{ (mod 4) } \\
k+1-q & \mbox{ if } v \equiv 3 \mbox{ (mod 4) }
\end{array} \right.
\end{equation*}
\end{Prop}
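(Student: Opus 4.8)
The plan is to bypass induction and instead write down \emph{every} $\gamma_t(P_{4k+3})$-set explicitly; each $TDV(v)$ then follows from a direct count of how many of these sets contain $v$.

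First I would recall from the proof of Theorem~\ref{theorem on paths} (case $n=4k+3$) that a $\gamma_t$-set $\Gamma$ is a union of $k+1$ pairwise non-adjacent edges $\{a_1,a_1+1\},\dots,\{a_{k+1},a_{k+1}+1\}$ with $a_1<\dots<a_{k+1}$. Such an edge $\{a,a+1\}$ totally dominates $[a-1,a+2]\cap\{1,\dots,4k+3\}$, which has $4$ vertices unless $a=1$ or $a=4k+2$ (a \emph{boundary} edge), when it dominates $3$. If $B\in\{0,1,2\}$ counts the boundary edges, the edges cover $4(k+1)-B$ vertex-slots; since all $4k+3$ vertices must be dominated, $4k+4-B\ge 4k+3$, forcing $B\le1$, and the number of doubly dominated vertices equals $1-B$. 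Thus no $\Gamma$ has both end-vertices, a one-end $\Gamma$ has no double domination ($B=1$), and a no-end $\Gamma$ has exactly one doubly dominated vertex ($B=0$), recovering the counts used for $\tau=k+2$.

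Next I would turn this into an explicit list. Non-adjacency forces each gap $a_{m+1}-a_m\ge 3$, while full domination forces $a_{m+1}-a_m\le 4$; when $B=0$ one has $a_1=2$ and $a_{k+1}=4k+1$, so the $k$ gaps sum to $4k-1$ and hence exactly one gap equals $3$ and the rest equal $4$. Indexing the no-end sets by the position $i\in\{1,\dots,k\}$ of the short gap, and adjoining the two one-end sets, I get a single family $\Gamma_0,\Gamma_1,\dots,\Gamma_{k+1}$ ($\tau=k+2$ sets), where $\Gamma_i$ consists of the edges $\{4j+2,4j+3\}$ for $0\le j\le i-1$ together with the edges $\{4j+1,4j+2\}$ for $i\le j\le k$ (the doubly dominated vertex, when present, being $4i$); here $\Gamma_0$ is the all-$\{4j+1,4j+2\}$ set through the left end and $\Gamma_{k+1}$ is the all-$\{4j+2,4j+3\}$ set through the right end.

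Finally I would compute $TDV(v)$ for $v=4q+r$ by counting the indices $i$ with $v\in\Gamma_i$. The vertex $4q$ lies in no edge of any $\Gamma_i$, giving $0$. The vertex $4q+1$ occurs only as the lower endpoint of a $\{4j+1,4j+2\}$-edge (with $j=q$), present exactly when $i\le q$, giving $q+1$. The vertex $4q+3$ occurs only in a $\{4j+2,4j+3\}$-edge (with $j=q$), present exactly when $i\ge q+1$, giving $(k+1)-q$. The vertex $4q+2$ is the upper endpoint of a $\{4j+1,4j+2\}$-edge when $i\le q$ and the lower endpoint of a $\{4j+2,4j+3\}$-edge when $i\ge q+1$, so it lies in \emph{every} $\Gamma_i$, giving $k+2$. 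These are precisely the four claimed values, and the case $k=0$ ($P_3$, with $\gamma_t$-sets $\{1,2\}$ and $\{2,3\}$) checks the formula directly. I expect the only real work to be the structural step: proving that the boundary-edge/short-gap dichotomy exhausts all $\gamma_t$-sets and produces exactly $\Gamma_0,\dots,\Gamma_{k+1}$. Once this enumeration is fixed the $TDV$ values fall out by sorting vertices into residue classes mod $4$, with no induction required; to match the inductive style of the preceding propositions one could instead argue from $P_{4k+3}$ to $P_{4k+7}$, where the map $\Gamma\mapsto\Gamma\cup\{4k+5,4k+6\}$ embeds the $k+2$ old sets as $\Gamma_0,\dots,\Gamma_{k+1}$ of the larger path, leaving exactly one genuinely new set (the right-end set ending in $\{4k+6,4k+7\}$) that shifts the counts exactly as the formula requires, in direct analogy with the set $\Gamma^*$ in the proof of Proposition~\ref{path on 4k+1}.
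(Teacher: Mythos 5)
Your proposal is correct, and it takes a genuinely different route from the paper. The paper's (sketched) proof keeps the same two-case split you derive from the boundary-edge count $B$ --- the two $\gamma_t$-sets containing exactly one end-vertex, giving $TDV'(v)$, and the $k$ sets containing no end-vertex, giving $TDV''(v)$ --- but it establishes the no-end-vertex count $TDV''$ by induction on $k$, appending $\{4k+2,4k+3\}$-type edges as in the proof of Proposition~\ref{path on 4k+1}, and then sums $TDV=TDV'+TDV''$. You instead produce a closed-form enumeration: the slot-counting argument ($4(k+1)-B\geq 4k+3$ forces $B\leq 1$ with $1-B$ doubly dominated vertices), the gap constraints $3\leq a_{m+1}-a_m\leq 4$, and the resulting single parametrized family $\Gamma_i$ ($0\leq i\leq k+1$) consisting of $\{4j+2,4j+3\}$ for $j<i$ and $\{4j+1,4j+2\}$ for $j\geq i$; I have verified this family is exactly the set of $\gamma_t(P_{4k+3})$-sets and that your membership counts give $q+1$, $k+2$, $k+1-q$, and $0$ in the four residue classes (they also pass the checks $\sum_v TDV(v)=(k+2)(2k+2)=\tau\gamma_t$ and the symmetry $TDV(v)=TDV(n+1-v)$). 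What your approach buys: it is induction-free and self-contained, it re-derives $\tau=k+2$ as a byproduct rather than citing it, and it makes structurally transparent facts that the paper's formulas only record numerically --- e.g., every vertex $\equiv 2\pmod 4$ lies in \emph{every} $\gamma_t$-set, while no vertex $\equiv 0\pmod 4$ lies in any. What the paper's route buys is uniformity: the same inductive template is reused across $P_{4k+1}$, $P_{4k+2}$, and $P_{4k+3}$, which matters because for $P_{4k+2}$ the $\gamma_t$-sets come in three structural types and an explicit enumeration of your kind would be considerably messier there; for $n\equiv 3\pmod 4$, where every $\gamma_t$-set is a union of $P_2$'s, your enumeration is arguably the cleaner proof. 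Your closing remark correctly notes how to refit the argument into the paper's inductive style via $\Gamma\mapsto\Gamma\cup\{4k+5,4k+6\}$, so the two presentations are fully reconcilable.
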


\textit{Sketch of Proof:}
Let $\Gamma$ be a $\gamma_t$-set with $k \geq 0$. Note that no $\Gamma$ contains both end-vertices of $P_{4k+3}$.
We consider two cases. First, suppose $\Gamma$ contains exactly one end-vertex, and denote by $TDV'(v)$ the number of such $\Gamma$'s containing $v$. We have
\begin{equation*}\label{path on 4k+3 case1}
TDV'(v)= \left\{
\begin{array}{ll}
0 & \mbox{ if } v \equiv 0 \mbox{ (mod 4) }\\
1 & \mbox{ if } v \equiv 1,3 \mbox{ (mod 4) } \\
2 & \mbox{ if } v \equiv 2 \mbox{ (mod 4) }
\end{array} \right.
\end{equation*}
Next, assume $\Gamma$ contains no end-vertices -- so $k\geq 1$, and denote by $TDV''(v)$ the number of such $\Gamma$'s containing $v$. Writing
$v=4q+r$ for $0 \le r < 4$, we have
\begin{equation}\label{path on 4k+3 case2}
TDV''(v)= \left\{
\begin{array}{ll}
0 & \mbox{ if } v \equiv 0 \mbox{ (mod 4) } \\
q & \mbox{ if } v \equiv 1 \mbox{ (mod 4) } \\
k & \mbox{ if } v \equiv 2 \mbox{ (mod 4) } \\
k-q & \mbox{ if } v \equiv 3 \mbox{ (mod 4) }
\end{array} \right.
\end{equation}
There are $k$ $\gamma_t$-sets in this case, and formula~(\ref{path on 4k+3 case2}) can be proved by induction on $k$ as in the proof of Proposition~\ref{path on 4k+1}.
Now, $TDV(v)=TDV'(v)+TDV''(v)$, which is as claimed. \hfill $\square$ \\

\bigskip

\textit{Acknowledgement.} The author greatly appreciates Eunjeong Yi for much valuable help in drawing the figures and in deriving the formulas contained in this paper. The author also likes to thank the referee for a correction and some helpful suggestions which improved the paper.\\

\end{document}